\numberwithin{equation}{section}
\definecolor{newgreen}{rgb}{0.1, 0.6, 0.1}
\definecolor{newblue}{rgb}{0.0, 0.1, 0.7}
\definecolor{ggreen}{rgb}{0.5, 0.85, 0.3}
\definecolor{rred}{rgb}{0.65, 0.2, 0.2}
\definecolor{ggray}{gray}{0.7}
\definecolor{bblue}{rgb}{0.0, 0.0, 1}
\definecolor{darkbrown}{rgb}{0.4, 0.26, 0.13}
\definecolor{medblue}{rgb}{0,0,.9}
\newtheorem{defi}{Definition}[section]
\newtheorem{rem}[defi]{Remark}
\newtheorem{lem}[defi]{Lemma}
\newtheorem{cor}[defi]{Corollary}
\newtheorem{pro}[defi]{Proposition}
\newtheorem{theo}[defi]{Theorem}
\newtheorem{exm}[defi]{Example}
\newcommand{\prob}{\mathbf{P}}
\newcommand{\esp}{\mathbf{E}}
\newcommand{\R}{\mathbb R}
\newcommand{\e}{\varepsilon}
\newcommand{\et}{\quad\mbox{and}\quad}
\newcommand{\ou}{\quad\mbox{where}\quad}
\newcommand{\bc}{{\bf c}}
\newcommand{\bcs}{{\bf c^\star}}
\newcommand{\supp}{\textrm{supp}}
\newcommand{\Qk}{\mathfrak Q_k}
\begin{document}

%
%
%
%
%

\title{A notion of stability for k-means clustering}
\author{Thibaut Le Gouic \footnote{Aix Marseille Univ, CNRS, Centrale Marseille, I2M, Marseille, France} and Quentin Paris\footnote{National Research University Higher School of Economics. The study has been funded by the Russian Academic Excellence Project 5-100}}

\maketitle

\begin{abstract}
In this paper, we define and study a new notion of stability for the $k$-means clustering scheme building upon the field of quantization of a probability measure. We connect this definition of stability to a geometric feature of the underlying distribution of the data, named absolute margin condition, inspired by recent works on the subject. 
\end{abstract}

\tableofcontents

\newpage
\section{Introduction} 
Unsupervised classification consists in partitioning a data set into a series of groups (or clusters) each of which may then be regarded as a separate class of observations.
This task, widely considered in data analysis, enables, for instance, practitioners, in many disciplines, to get a first intuition about their data by identifying meaningful groups of observations.
The tools available for unsupervised classification are various.
Depending on the nature of the problem, one may rely on a model based strategy modeling the unknown distribution of the data as a mixture of known distributions with unknown parameters.
Another approach, model-free, is embodied by the well known $k$-means clustering scheme. This paper focuses on the stability of this clustering scheme.

\subsection{Quantization and the k-means clustering scheme}
\label{kmeans}
The $k$-means clustering scheme prescribes to classify observations according to their distances to chosen representatives.
This clustering scheme is strongly connected to the field of quantization of probability measures and this paragraph shortly recalls how these concepts interact.
Suppose our data modeled by $n$ i.i.d. random variables $X_{1},\dots,X_{n}$, taking their values in some metric space $(E,d)$, and with same distribution $P$ as (and independent of) a generic random variable $X$.
Let $k\ge 1$ be an integer fixed in advance, representing the prescribed number of clusters, and define a $k$-points\,\footnote{The integer $k$ is supposed fixed throughout the paper and all quantizers considered below are supposed to be $k$-points quantizers.} quantizer as any mapping $q:E\to E$ such that\footnote{For a set $A$, notation $|A|$ refers to the number of elements in $A$.} $|q(E)|=k$. 
Denoting $c_1,\dots,c_k$ the values taken by $q$, the sets $\{x\in E: q(x)=c_j\}$, $1\le j\le k$, partition the space $E$ into $k$ subsets (or cell) and each point $c_j$ (called indifferently a center, a centroid or a code point) stands as a representative of all points in its cell.
Given a quantizer $q$, associated data clusters are defined, for all $1\le j\le k$, by
\[
C_{j}(q):=\{x\in E: q(x)=c_j\}\cap\{X_{1},\dots,X_{n}\}.
\] 

The performance of this clustering scheme is naturally measured by the average square distance, with respect to $P$, of a point to its representative. In other words, the risk of $q$ (also referred to as its distortion) is defined by
\begin{equation}
\label{distortionq}
R(q):=\int_E d(x,q(x))^2\,{\rm d}P(x).
\end{equation}
Quantizers of special interest are nearest neighbor (NN) quantizers, \emph{i.e.} quantizers such that, for all $x\in E$,
\[q(x)\in\underset{c\in q(E)}{\arg\min}\ d(x,c).\]
The interest for these quantizers relies on the straightforward observation that for any quantizer $q$, an NN quantizer $q'$ such that $q(E)=q'(E)$ satisfies $R(q')\le R(q)$. 
Hence, attention may be restricted to NN quantizers and any optimal quantizer 
\begin{equation}
\label{qopt}
q^{\star}\in \underset{q}{\arg\min}\ R(q),
\end{equation}
(where $q$ ranges over all quantizers $k$-points quantizers) is necessarily an NN quantizer.
We will denote $\Qk$ the set of all $k$-points NN quantizers and, unless mentionned explicitly, all quantizers involved in the sequel will be considered as members of $\Qk$. For $q\in\Qk$, the value of its risk is entirely described by its image. Indeed, if $q\in\Qk$ takes values $c_1,\dots,c_k$, then 
\begin{equation}\label{Rc}
R(q)=\int_E\min_{1\le j\le k}d(x,c_{j})^2\,{\rm d}P(x).
\end{equation}
Denoting $\mathbf c=\{c_1,\dots,c_k\}$, referred to as a codebook, we will often denote by $R(\mathbf c)$ the right hand side of \eqref{Rc} with a slight abuse of notation.\\

A few additional considerations, relative to NN-quantizers, will be useful in the paper. Given ${\bf c}=\{c_1,\dots,c_k\}$, denote $V_j({\bf c})$ the set of points in $E$ closer to $c_j$ than to any other $c_\ell$, that is
\[V_j({\bf c}):=\left\{\,x\in E\,:\,\forall \ell\in\{1,\dots,k\}\,,\, d(x,c_{j})\le d(x, c_{\ell})\,\right\}.\]
These sets do not partition the space $E$ since, for $i\ne j$, the set $V_i({\bf c})\cap V_j({\bf c})$ is not necessarily empty. A Voronoi partition of $E$ relative to ${\bf c}$ is any partition $W_1,\dots, W_k$ of $E$ such that, for all $1\le j\le k$, $W_j\subset V_j(\mathbf c)$ up to relabeling. For instance, given $q\in\Qk$ with image $\mathbf c$, the sets $W_j=q^{-1}(c_j)$, $1\le j\le k$, form a Voronoi partition relative to $\mathbf c$. We call frontier of the Voronoi diagram generated by $\bf c$ the set
\begin{equation}
\label{front}
\mathcal F({\bf c}):=\bigcup_{i\ne j}V_i({\bf c})\cap V_j({\bf c}).
\end{equation}
Given an optimal quantizer $q^{\star}$ with image $\mathbf c^{\star}=\{c^{\star}_1,\dots,c^{\star}_k\}$, a remarkable property, known as the center condition, states that for all $1\le j\le k$, and provided $|\supp(P)|\ge k$,
\begin{equation}
\label{ccond}
P(V_j(\mathbf c^{\star}))>0\et c^{\star}_j\in\underset{c\in E}{\arg\min}\int_{V_j(\mathbf c^{\star})}d(x,c)^2\,\textrm{d}P(x).
\end{equation}
From now on, the probability measure $P$ will be supposed to have a support of more than $k$ points.\\


We end this subsection by mentioning that computing an optimal quantizer requires the knowledge of the distribution $P$. From a statistical point of view, when the only information available about $P$ consists in the sample $X_1,\dots,X_n$, reasonable quantizers are empirically optimal quantizers, \emph{i.e.} NN quantizers associated to any codebook $\hat{\mathbf c}=\{\hat c_{1},\dots,\hat c_{k}\}$ satisfying
\begin{equation}
\label{empq}
\hat{\mathbf c}\in\underset{\mathbf c=\{c_1,\dots,c_k\}}{\arg\min}\ \hat R(\mathbf c)\ou \hat R(\mathbf c)=\frac{1}{n}\sum_{i=1}^{n}\min_{1\le j\le k}d(X_i,c_j)^2.
\end{equation}
In other words, empirically optimal quantizers minimize the risk associated to the empirical measure 
\[P_n:=\frac{1}{n}\sum_{i=1}^{n}\delta_{X_i}.\]
\noindent The computation of empirically optimal centers is known to be a hard problem, due in particular to the non-convexity of ${\bf c}\mapsto \widehat R(\mathbf c)$, and is usually performed by Lloyd's algorithm for which convergence guarantees have been obtained recently by \cite{LuZh16} in the context where $P$ is a mixture of sub-gaussian distributions. 

\subsection{Risk bounds}
\label{risk}
The performance of the $k$-means clustering scheme, based on the notion of risk, has been widely studied in the literature. Whenever $(E,|.|)$ is a separable Hilbert space, the existence of an optimal codebook, \emph{i.e.} of $\mathbf c^{\star}=\{c^{\star}_1,\dots,c^{\star}_k\}$ such that  
\[
R(\mathbf c^{\star})=R^{\star}=\inf_{\mathbf c=\{c_1,\dots,c_k\}}R(\mathbf c),
\]
is well established \citep[see, e.g, Theorem 4.12 in][]{GrLu00}, provided $\esp|X|^2<+\infty$.
In this same context, works of \citet{Po81,Po82a} and \citet{AbWi84} imply that $R(\hat{\mathbf c})\to R^{\star}$ almost surely as $n$ goes to $+\infty$, where $\hat{\mathbf c}$ is as in \eqref{empq}.
The non-asymptotic performance of the $k$-means clustering scheme has also received a lot of attention and has been studied, for example, by \citet{Ch94, LiLuZe94, BaLiLu98, Li00, Li01, An05, AnGyGy05} and \citet{BiDeLu08}.
For instance \citet{BiDeLu08} prove that in a separable Hilbert space, and provided $|X|\le L$  almost surely, then
\[
\esp R(\hat{\mathbf c})-R^{\star}\le 12kL^{2}/\sqrt n,
\]
for all $n\ge 1$. A similar result is established in \cite{CaPa12} relaxing the hypothesis of bounded support by supposing only the existence of an exponential moment for $X$.
In the context of a separable Hilbert space, \citet{Le15} establishes a stronger result under some conditions involving the quantity $p(t)$ defined as follows. 

\begin{defi}[\,{\citealp{Le15}}\,]\label{def:marglev} Let $\mathcal M$ be the set of all ${\bf c^{\star}}=\{c^{\star}_1,\dots,c^{\star}_k\}$ such that $R(\mathbf c^{\star})=R^{\star}$.
For $t\ge 0$, we define
\begin{equation}
\label{pe}
p(t):=\sup_{{\bf c^{\star}}\in\mathcal M} P(\mathcal F({\bf c^{\star}})^{t}),
\end{equation}
where, for any set $A\subset E$, the notation $A^{t}$ stands for the $t$-neighborhood of $A$ in $E$ defined by $A^{t}=\{x\in E:d(x,a)\le t\}$ and where $\mathcal F({\bf c^{\star}})$ is defined in \eqref{front}.
\end{defi}
\noindent For any codebook ${\bf c}=(c_1,\dots,c_k)$, $P(\mathcal F({\bf c})^{t})$ corresponds to the probability mass of the frontier  of the associated Voronoi diagram inflated by $t$ (see Figure \ref{margin}). Under some slight restrictions and supposing $p(t)$ does not increase too rapidly with $t$, it appears that the excess risk is of order $\mathcal O(1/n)$ as described below.
\begin{theo}[\,{Proposition 2.1 and Theorem 3.1 in \citealp{Le15}}\,]
\label{theoLevrard}
Suppose that $(E,|.|)$ is a (separable) Hilbert space. Denote 
\[B=\inf_{\mathbf c^{\star}\in\mathcal M,i\ne j}|c^{\star}_i-c^{\star}_j|\et p_{\emph{min}}=\inf_{\mathbf c^{\star}\in\mathcal M, 1\le j\le k}P(V_j({\mathbf c}^{\star})).\]
$(1)$ Suppose that $P(x: |x|\le L)=1$ for some $L>0$. Then $B>0$ and $p_{\emph {min}}>0$.\vspace{0.2cm}\\ 
$(2)$ Suppose in addition that there exists $r_0>0$ such that, for all $0<t\le r_0$,
\[p(t)\le \frac{Bp_{\emph{min}}}{128 L^2}t,\]
where $p(t)$ is as in \eqref{pe}. Then, for all $x>0$, and any $\mathbf{\hat c}$ minimizing the empirical risk as in \eqref{empq}, 
\[R(\mathbf{\hat c})-R^{\star}\le \frac{C(k+x)L^2}{n},\]
with probability at least $1-e^{-x}$, where $C>0$ denotes a constant depending on auxiliary (and explicit) characteristics of $P$.
\end{theo}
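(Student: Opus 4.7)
For part (1), existence of $\mathbf c^\star$ follows from Theorem 4.12 in \citet{GrLu00}. The center condition \eqref{ccond} writes each $c_j^\star$ as the $P$-barycenter of $V_j(\mathbf c^\star)$, so $|c_j^\star|\le L$ because $P$ is supported in the ball of radius $L$. The set $\mathcal M$, viewed modulo permutation of indices, is then sequentially weakly compact: any weak limit of minimizers remains a minimizer because $R$ is weakly lower semicontinuous on bounded sets of a Hilbert space. Now $\mathbf c\mapsto\min_{i\ne j}|c_i-c_j|$ is upper semicontinuous and $\mathbf c\mapsto\min_j P(V_j(\mathbf c))$ is lower semicontinuous. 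The first is strictly positive on $\mathcal M$ since two coincident centers together with $|\supp(P)|\ge k$ would allow a strict improvement of the risk by splitting, the second by the center condition. Attainment on the compact quotient yields $B,p_{\min}>0$.

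For part (2), the strategy is to reduce the estimate to a localized empirical process bound via a Bernstein-type variance inequality, which itself rests on a quadratic lower bound for the excess risk. The key lemma to prove is that, under the margin hypothesis, there exist $c_1>0$ (depending on $p_{\min}$) and $r_1>0$ such that, for any codebook $\mathbf c$ within distance $r_1$ of some $\mathbf c^\star\in\mathcal M$ (after matching indices),
\begin{equation*}
R(\mathbf c)-R^\star\ge c_1\,\|\mathbf c-\mathbf c^\star\|^2.
\end{equation*}
To establish this, I would fix $t$ proportional to $\|\mathbf c-\mathbf c^\star\|$ and split $E$ into the tube $\mathcal F(\mathbf c^\star)^t$ and its complement. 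Outside the tube, $x\in V_j(\mathbf c^\star)$ forces the nearest center of $\mathbf c$ to be $c_j$ (after matching), so the pointwise excess $|x-c_j|^2-|x-c_j^\star|^2$ expands as $|c_j-c_j^\star|^2+2\langle c_j^\star-x,\,c_j-c_j^\star\rangle$, and integration over $V_j(\mathbf c^\star)\setminus\mathcal F(\mathbf c^\star)^t$ annihilates the cross term through the center condition up to a $P(\mathcal F(\mathbf c^\star)^t)$-sized correction. This produces a bulk lower bound of order $(p_{\min}-CP(\mathcal F^t))\|\mathbf c-\mathbf c^\star\|^2$ minus a tube term of order $CL\|\mathbf c-\mathbf c^\star\|\,P(\mathcal F^t)$. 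Plugging in the margin condition $p(t)\le Bp_{\min}t/(128L^2)$ absorbs both correction terms into a strictly smaller multiple of $\|\mathbf c-\mathbf c^\star\|^2$, the explicit constant $128$ being calibrated so that the dominant diagonal term survives.

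Once the quadratic lower bound is established, the remaining argument is fairly standard. Because $P$ is supported in $\{|x|\le L\}$, the $k$-means loss $\gamma(x,\mathbf c)=\min_j|x-c_j|^2$ is $O(L)$-Lipschitz in $\mathbf c$, whence $\mathrm{Var}_P(\gamma(X,\mathbf c)-\gamma(X,\mathbf c^\star))\le CL^2\|\mathbf c-\mathbf c^\star\|^2\le (CL^2/c_1)(R(\mathbf c)-R^\star)$, which is the desired Bernstein condition. I would then apply Bousquet's version of Talagrand's concentration inequality to the localized class $\{\gamma(\cdot,\mathbf c)-\gamma(\cdot,\mathbf c^\star):R(\mathbf c)-R^\star\le r\}$, whose Rademacher complexity is of order $\sqrt{kL^2r/n}$ by a chaining argument along the lines of \citet{BiDeLu08}, and close the argument with a peeling device solving the fixed-point inequality $r\gtrsim \sqrt{kL^2r/n}+L^2x/n$. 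The solution $r^\star\asymp(k+x)L^2/n$ yields the stated bound with probability at least $1-e^{-x}$. The single delicate step is the quadratic lower bound: the geometric accounting near the Voronoi frontier must be sharp enough to match the margin constant $Bp_{\min}/(128L^2)$; the subsequent concentration and peeling are essentially off-the-shelf once the Bernstein inequality is in place.
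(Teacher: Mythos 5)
This statement is not proved in the paper at all: as its bracketed attribution indicates, it is imported verbatim from Proposition 2.1 and Theorem 3.1 of \citet{Le15}, so there is no in-paper proof to compare your attempt against. Measured instead against Levrard's original argument, your outline reproduces its architecture: a compactness argument for part (1), and for part (2) the chain margin condition $\Rightarrow$ quadratic lower bound on the excess risk in terms of the distance to the optimal codebook $\Rightarrow$ Bernstein-type variance control $\Rightarrow$ localized concentration plus peeling, which is exactly how the $1/n$ rate is obtained there.

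Two points would need repair before your sketch could stand as a proof. First, in part (1) the semicontinuity is stated backwards: on a weakly compact set you need $\mathbf c\mapsto\min_{i\ne j}|c_i-c_j|$ to be weakly \emph{lower} semicontinuous (which it is, the norm being weakly lsc) so that the infimum $B$ is attained and hence positive; upper semicontinuity alone is compatible with $B=0$. Similar care is needed for $\mathbf c\mapsto\min_j P(V_j(\mathbf c))$, which is not lower semicontinuous in general because mass can concentrate on the Voronoi frontier in the limit; this is handled in \citet{Le15} by a subsequence-and-contradiction argument using the center condition, not by a bare semicontinuity claim. Second, your quadratic lower bound $R(\mathbf c)-R^\star\ge c_1\|\mathbf c-\mathbf c^\star\|^2$ is only established for $\mathbf c$ in an $r_1$-neighborhood of $\mathcal M$, whereas the peeling argument needs variance control over the whole localized class; one must add the (again compactness-based) observation that the excess risk is bounded below by a positive constant outside any such neighborhood, so that the local bound extends, up to a constant, to all codebooks in the ball of radius $L$. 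With these two repairs your sketch is a faithful reconstruction of the cited proof.
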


\begin{figure}[htbp]
\centering
\includegraphics[width=8cm,angle=-90]{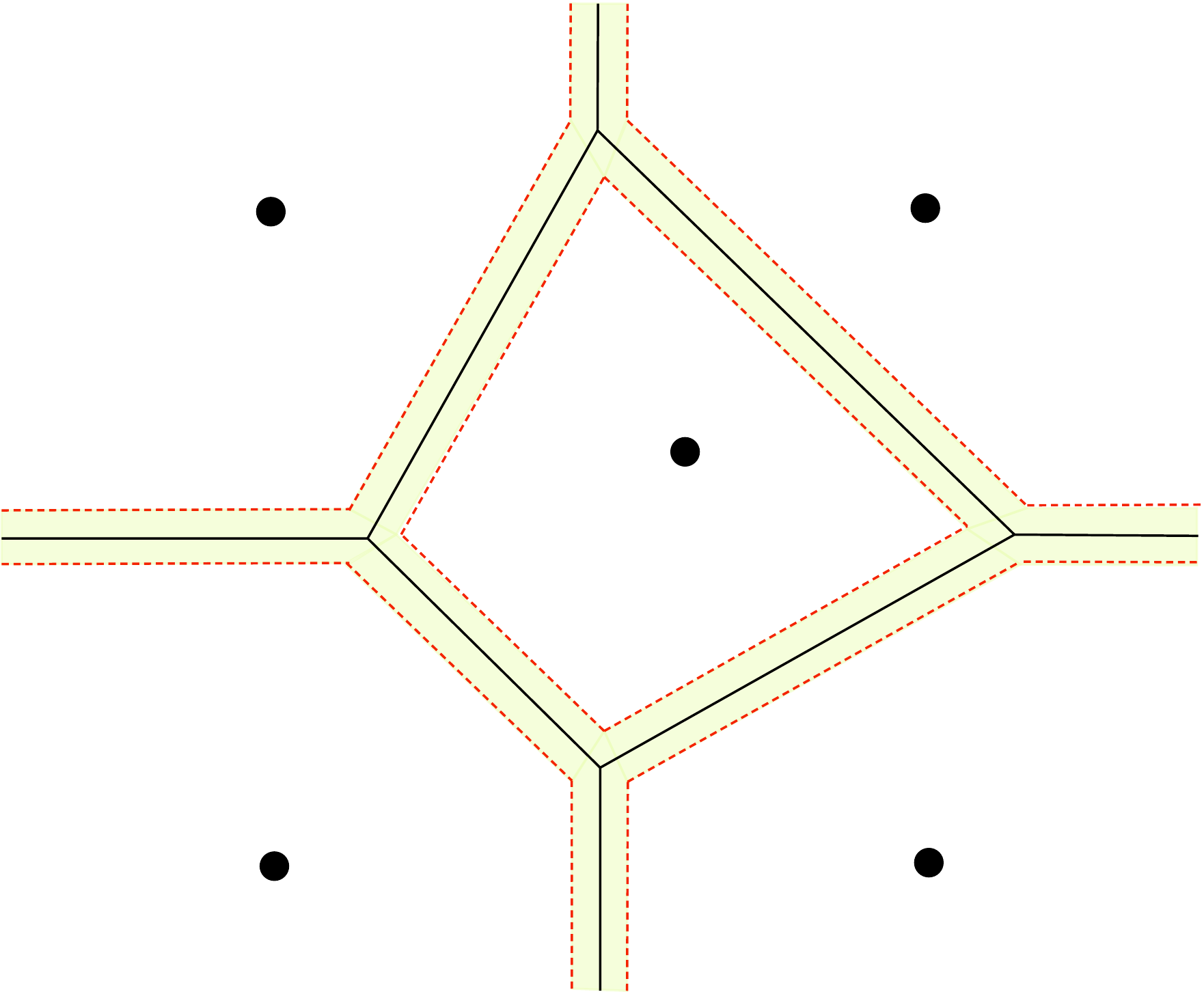}
\caption{For $k=5$, the figure represents $k$ centers in the Euclidean plane. The black solid lines define the frontier of the associated Voronoi diagram. The light-green area, inside the red dashed lines, corresponds to the $t$-neighborhood of this frontier for some small $t$.}
\label{margin}
\end{figure}

\subsection{Stability}\label{sec:introstab}
For a quantizer $q\in\Qk$, the risk $R(q)$ describes the average square distance of a point $x\in E$ to its representative $q(x)$ whenever $x$ is drawn from $P$.
The risk of $q$ characterizes therefore an important feature of the clustering scheme based on $q$ and defining optimality of $q$ in terms of the value of its risk appears as a reasonable approach.
However, an important though simple observation is that the excess risk $R(q)-R(q^{\star})$, for an optimal quantizer $q^{\star}$, isn't well suited to describe the geometric similarity between the clusterings based on $q$ and $q^{\star}$. For one thing, there might be several optimal codebooks. 
Also, even in the context where there is a unique optimal codebook, quite different configurations of centers $\mathbf c$ may give rise to very similar values of the excess risk $R(\mathbf c)-R(\mathbf c^{\star})$.
This observation relates to the difference between \emph{estimating the optimal quantizer} and \emph{learning to perform as well as the optimal quantizer} and is relevant in a more general context as briefly discussed in Appendix \ref{contrast} below.
Basically, the idea of stability we are referring to consists in identifying situations where having centers $\mathbf c$ with small excess risk guarantees that $\mathbf c$ isn't far from an optimal center $\mathbf c^{\star}$ geometrically speaking.
We formalize this idea below.

\begin{defi}Consider a function $F:\Qk\times \Qk\to\R_+$. The clustering problem discussed in subsections \ref{kmeans} and \ref{risk} is called $(F,\phi)$-stable if, for any optimal quantizer $q^{\star}$, for any auxiliary quantizer $q$,  
\begin{equation}\label{eq:stability}
F(q^{\star},q)\le \phi(R(q)-R(q^{\star})).
\end{equation}
We say that the clustering problem is \emph{strongly} stable for $F$, if $\phi$ is linear.
\end{defi}

Note first that, for some chosen $F$, the notion of stability defined above characterizes a property of the underlying distribution $P$.
Here, properties of the function $F$ are deliberately unspecified as, in practice, $F$ can be chosen in order to encode very different properties, of more or less geometric nature.
An important property of this notion is that stable clustering problem are such that $\e$-minimizers of the risk are "close" (in the sense of $F$) to the optimal quantizer (see Corollary \ref{cor:pn} below).

\begin{rem}
The notion of stability described above differs from the notion of algorithm stability studied in
\cite{ben2006sober} and \cite{ben2007stability}.
Their notion of stability is defined for a function (called algorithm) $A:\bigcup_nE^n\rightarrow \Qk$ that maps any data set $\{X_1,\dots,X_n\}$ to a quantizer $A(\{X_1,\dots,X_n\})$. In this context, the stability of $A$ is defined by
\[
{\rm Stab}(A,P)=\lim_{n\rightarrow\infty}\mathbf{E}D(A(\{X_1,\dots,X_n\}),A(\{Y_1,\dots,Y_n\})),
\]
where the $X_i$'s and $Y_i$'s are i.i.d. random variables of common distribution $P$ and $D$ is a (pseudo-) metric on $\Qk$. Then, an algorithm is said to be stable for $P$ if ${\rm \,Stab}(A,P)=0$.
According to this definition, any constant algorithm $A=q$ is stable.
A notable difference, is that our notion of stability includes a notion of consistency.
Indeed, since $q\mapsto R(q)$ is continuous (for a proper choice of the metric on $\Qk$), then our notion of stability measures (if and) at which rate $q\rightarrow q^\star$ whenever $R(q)\rightarrow R^\star$. Thus, we focus only on the behaviour of algorithms $A$ such that $R(A(\{X_1,\dots,X_n\}))\rightarrow R^\star$.
\end{rem}

A first rather obvious choice for $F$ is given by 
\begin{equation}
\label{dcenters}
F_{1}(q^{\star},q):= \min_{\sigma}\, \max_{1\le j\le k}d(c^{\star}_j,c_{\sigma(j)}),
\end{equation}
if $q(E)=\{c_1,\dots,c_k\}$ and $q^{\star}(E)=\{c^{\star}_1,\dots,c^{\star}_k\}$ and where the minimum is taken over all permutations $\sigma$ of $\{1,\dots,k\}$ (see Figure \ref{f1et2}). 

\begin{rem}
\label{rem:haussdorf}
Note that $F_{1}(q^{\star},q)$ does not always coincide with the Hausdorff distance $d_H(\mathbf c^{\star},\mathbf c)$ between $\mathbf c=\{c_1,\dots,c_k\}$ and $\mathbf c^{\star}=\{c^{\star}_1,\dots,c^{\star}_k\}$. Indeed, Figure \ref{bwdots} presents a configuration of codebooks $\mathbf c$ and $\mathbf c^{\star}$ that have small Hausdorff distance but define NN quantizers $q$ and $q^{\star}$ with large $F_{1}(q^{\star},q)$. However, it may be seen that inequality 
\[ d_H(\mathbf c^{\star},\mathbf c)\le F_{1}(q^{\star},q) \]
always holds and that, provided 
\[ d_{H}(\mathbf c^{\star},\mathbf c) < \frac{1}{2}\min_{i\ne j}|c^{\star}_i-c^{\star}_j|,\]
we obtain $d_H(\mathbf c^{\star},\mathbf c)= F_{1}(q^{\star},q)$. The proof of these statements is reported in Appendix \ref{A:hausdorff}.
\end{rem}

\begin{figure}[htbp]
\centering
\includegraphics[width=8cm]{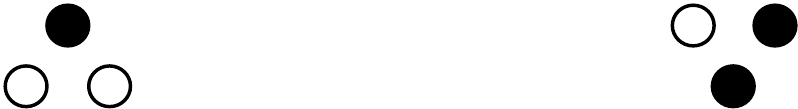}
\caption{In this simple case, where $k$=3, the set of black dots and the set of white dots have small Hausdorff distance but define two NN quantizers, say $q_1$ and $q_2$, for which $F_1(q_1,q_2)$ is large.}
\label{bwdots}
\end{figure}

Whenever $(E,|.|)$ is Euclidean, it follows from the previous remark and \citet{Po82b} that, provided the optimal codebook $\mathbf c^{\star}$ is unique, 
\[ F_1(q^{\star},\hat{q})\underset{n\to +\infty}{\longrightarrow} 0,\quad \mbox{a.s.},\]
when $\hat q$ is any quantizer minimizing the empirical risk $\hat R$. In \citet{Le15}, under the conditions of Theorem \ref{theoLevrard}, it is proven that for any optimal quantizer $q^{\star}$, and any $q\in\Qk$ such that $q(E)\subset\{x:|x|\le L\}$, 
\[
F_1(q^{\star},q)^2\le \frac{p_{\rm min}}{2}(R(q)-R(q^{\star})),
\]
provided $F_1(q^{\star},q)\le Br_0/4\sqrt{2}M$ which proves in this case (a local version of) the stability of the clustering scheme for $F_1$ (constants are defined in Theorem \ref{theoLevrard}).
In the same spirit, when $E=\mathbf R^d$ and for a measure $P$ with bounded support, \cite{rakhlin2007stability} show that $F_1(q_n,q'_n)\rightarrow 0$ as $n\rightarrow \infty$ whenever $q_n$ and $q'_n$ are optimal quantizers for  empirical measures $P_n$ and $P'_n$ whose supports differ by at most $o({\sqrt n})$ points.
In addition, their Lemma 5.1 shows that, for $P$ with bounded support,
\[
d_H(\bcs,\bc)\le C\,\esp [ | |X-q(X)|^2-|X-q^\star(X)|^2|]^{\frac{1}{d+2}},
\]
for some constant $C>0$. Note that,
since $\esp [ | |X-q(X)|^2-|X-q^\star(X)|^2|] \ge R(q)-R(q^\star)$, our main result (Theorem \ref{thm:absmarg}) improves this inequality under suitable conditions discussed below.

While $F_1$ captures distances between representatives of the two quantizers, it is however totally oblivious to the amount of wrongly classified points. From this point of view, a more interesting quantity is described by
\begin{equation}
\label{misclass}
F_2(q^{\star},q):=\min_{\sigma}\,P\left[\,\left(\,\bigcup_{j=1}^{k}V_j(\mathbf c^{\star})\cap V_{\sigma(j)}(\mathbf c)\,\right)^{c}\,\right],
\end{equation}
where the minimum is taken over all permutations $\sigma$ of $\{1,\dots,k\}$ (see Figure \ref{f1et2}).
This quantity measures exactly the amount of points that are misclassified by $q$ compared to $q^{\star}$, regarding $P$.\\

In the present paper, we study a related quantity, of geometric nature, defined simply as the average square distance between a quantizer $q$ and an optimal quantizer $q^{\star}$, \emph{i.e.}
\begin{equation}
\label{fatF}
\mathbf F(q^{\star},q)^2:=\int_E d(q(x),q^{\star}(x))^2\,\textrm{d}P(x).
\end{equation}
As discussed later in the paper (see Subsection \ref{sub:comp}), this quantity may be seen as an intermediate between $F_1$ and $F_2$ incorporating both the notion of proximity of the centers and the amount of misclassified points. The general concern of the paper will be to establish conditions under which the clustering scheme is strongly stable for this function $\mathbf F^2$.

\begin{figure}[htbp]
\centering
\includegraphics[width=6.4cm,angle=90]{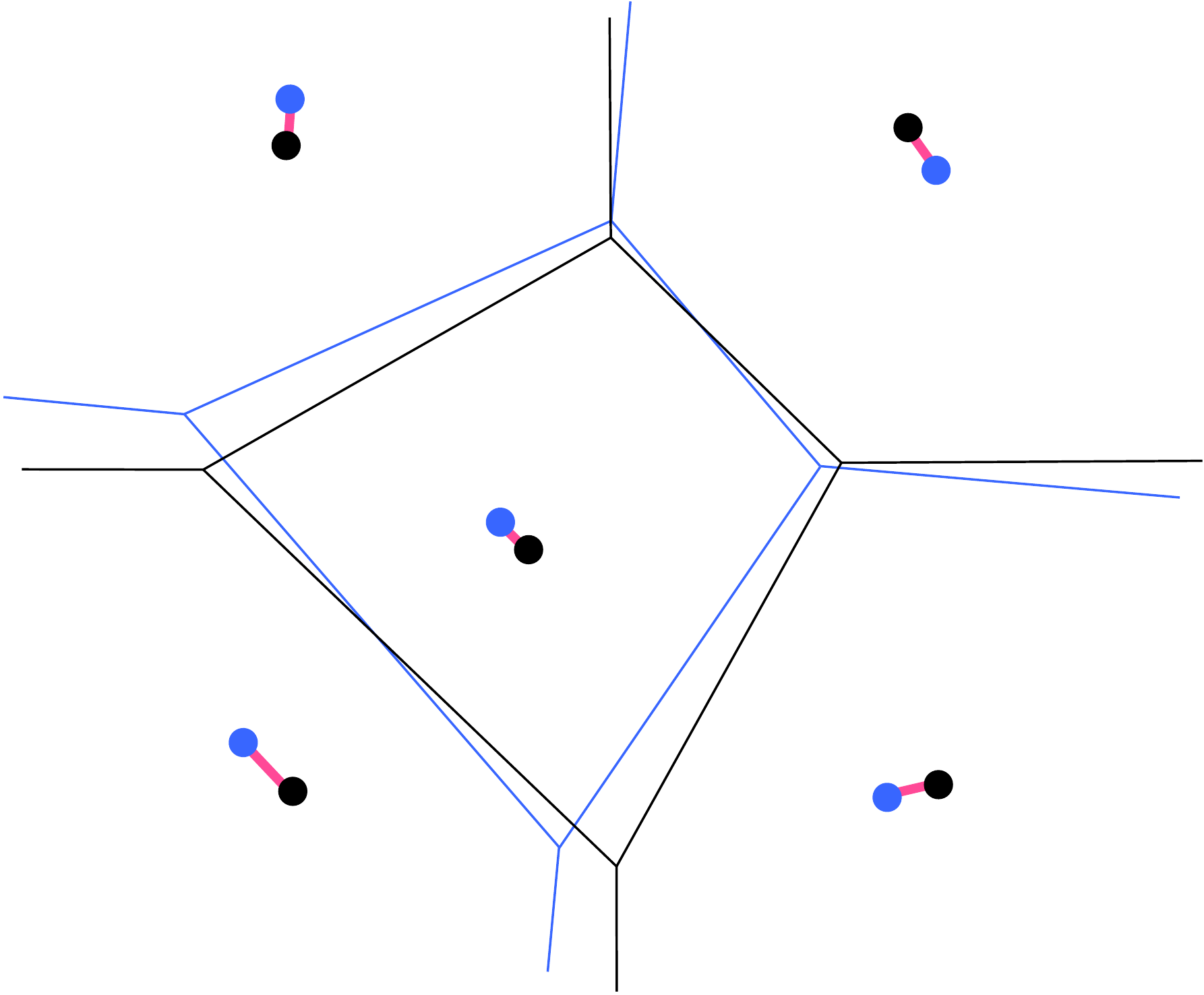}
\includegraphics[width=6.4cm,angle=90]{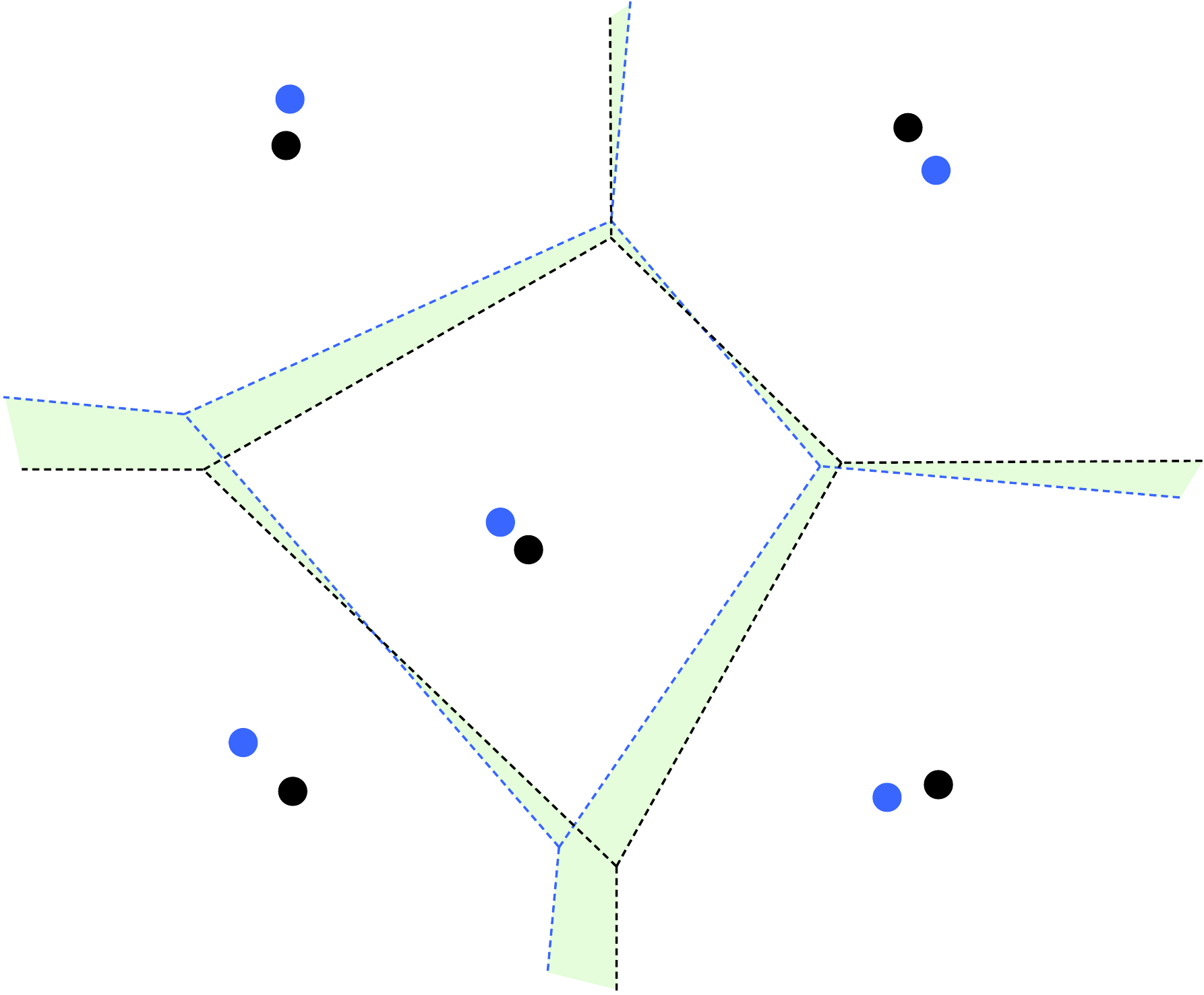}
\caption{The image of $q^{\star}$ (resp. $q$) is represented by the black (resp. blue dots). The quantity $F_1(q^{\star},q)$ corresponds to the length of the longest pink segment in the first (left) figure. The quantity $F_2(q^{\star},q)$ is the $P$ measure of the light green area in the second (right) figure.}
\label{f1et2}
\end{figure}

\section{Stability results} In this section, we present our main results. In the sequel, we restrict ourselves  to the case where $E$ is a (separable) Hilbert space  with scalar product $\langle.,.\rangle$ and associated norm $|.|$. For any $E$-valued random variable $Z$, we'll denote 
\[\|Z\|^2:=\esp|Z|^2,\]
for brevity.



\subsection{Absolute margin condition} We first address the issue of characterizing the stability of the clustering scheme in terms of the function $\mathbf F$ defined in \eqref{fatF}.
The next definition plays a central role in our main result. Recall that $X$ denotes a generic random variable with distribution $P$.

\begin{defi}[\,Absolute margin condition\,]\label{def:alambda}
Suppose that $\int|x|^2{\rm d}P(x)<+\infty$ and let $q^\star$ be an optimal $k$-points quantizer of $P$.
For $\lambda\ge 0$, define
\[
A(\lambda)=\left\{x\in E : q^\star(x+\lambda(x-q^\star(x)))=q^\star(x)\right\}.
\]
Then, $P$ is said to satisfy the \emph{absolute margin condition with parameter $\lambda_0>0$}, if both the following conditions hold:
\begin{enumerate}
\item $P(A(\lambda_0))=1$.
\item For any random variable $Y$ such that $\|X-Y\|\le \lambda_0\|X-q^\star(X)\|$, the map
\[
q\in\Qk\mapsto\|Y-q(Y)\|^2
\]
has a unique minimizer $q_{\lambda}$.
\end{enumerate}
\end{defi}

\begin{figure}[htbp]
\centering
\includegraphics[width=12cm]{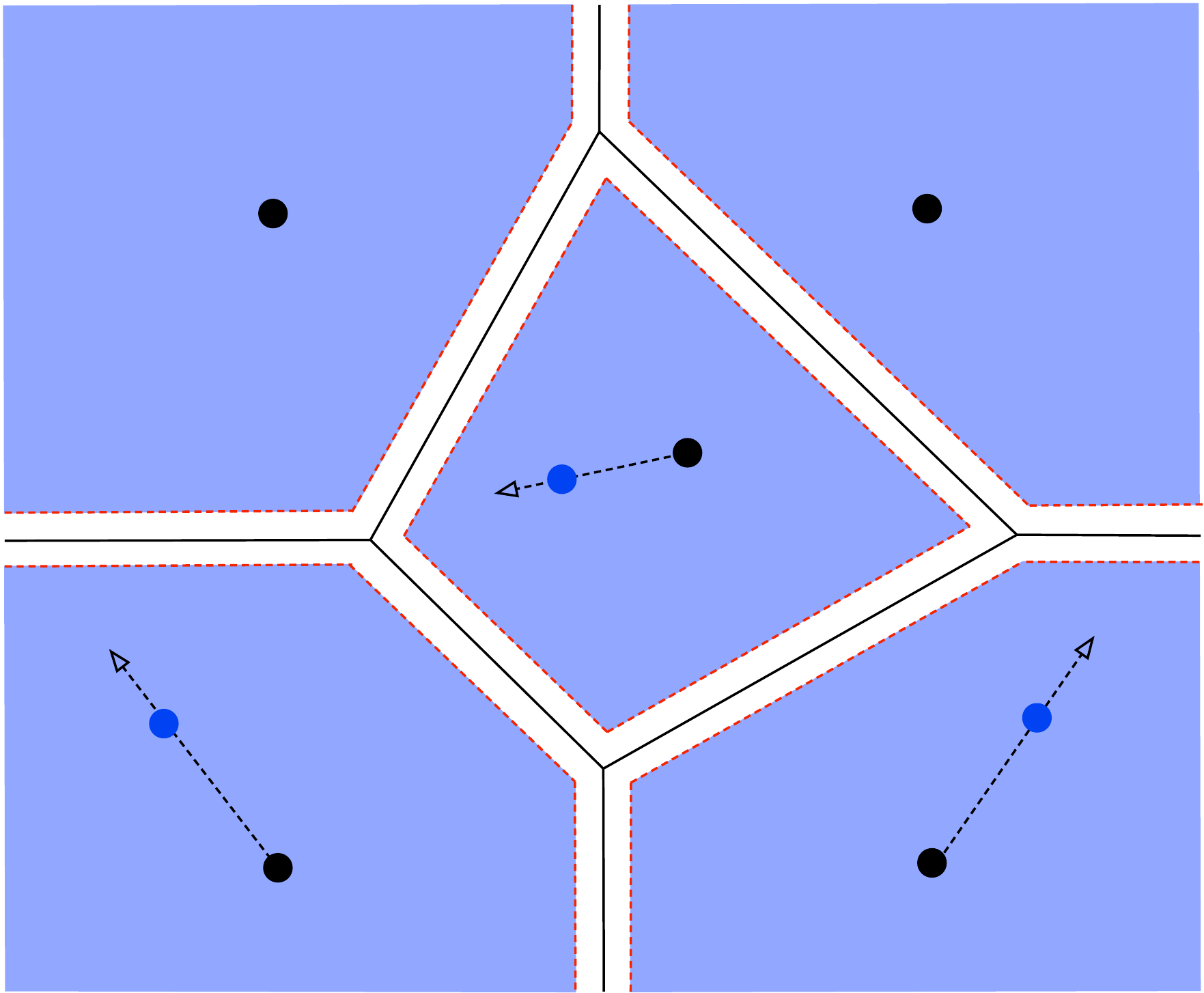}
\caption{The figure represents an optimal codebook (the black dots) for a distribution $P$, the frontier of the associated Voronoi diagram (the solid black line) and, for a fixed value of $\lambda>0$, the set $A(\lambda)$ (the light blue area). For points $x$ in $A(\lambda)$ (blue dots), the figure represents the associated point $x+\lambda(x-q^{\star}(x))$ (tip of the arrows) which, by definition of $A(\lambda)$, belongs to $A(\lambda)$.}
\label{Alambda}
\end{figure}

The second condition means that every probability measure, in a  neighborhood of $P$, has a unique $k$-quantizer.
Note that $A(0)=E$ and that $A(\lambda)\subset A(\lambda')$ for $\lambda'\le \lambda$. Letting $\mathbf c^{\star}=q^{\star}(E)$, the first point of this definition states that the neighborhood $E\setminus A(\lambda_0)$ of the frontier $\mathcal{F}(\bcs)$ is of probability zero (see Figure \ref{Alambda}). The next remark discusses the geometry of the set $A(\lambda)$, involved in the previous definition, in comparison with the sets $\mathcal{F}(\bcs)^t$ used in Definition \ref{def:marglev}. In particular, it follows from the following remark that, for appropriate $0<t_1<t_2$, the set $E\setminus A(\lambda)$ satisfies 
\[\mathcal F(\mathbf c)^{t_1}\subset (E\setminus A(\lambda_0)) \subset \mathcal F(\mathbf c)^{t_2}.\]

\begin{rem}
\label{lem:l0}
Let ${\mathbf c}=\{c_1,\dots,c_k\}\subset E$. Denote 
\[
m({\mathbf c})=\min_{i\ne j}| c_i-c_j|\quad\mbox{and}\quad M({\mathbf c})=\max_{i\ne j}| c_i-c_j|.
\]
For all $\lambda\ge 0$ and $t>0$, let 
\[
A(\lambda):=\{x\in E: q(x+\lambda(x-q(x)))=q(x)\}\et B(t):=E\setminus \mathcal F(\mathbf c)^{t}.\]
Then the following statements hold. 
\begin{enumerate}
\item For all $0< t<M({\mathbf c})/2$, 
\[
B(t)\subset A\left(\frac{2t}{M({\mathbf c})-2t}\right).
\]
\item For all $\lambda>0$
\[
A(\lambda)\subset B\left(\frac{m({\mathbf c})\lambda}{2(1+\lambda)}\right).
\]
\end{enumerate}
\end{rem}


We are now in position to state the main result of this paper.

\begin{theo}\label{thm:absmarg}
Suppose that $\int|x|^2{\rm d}P(x)<+\infty$. Let $q^{\star}$ be an optimal quantizer for $P$ and suppose that $P$ satisfies the absolute margin condition \ref{def:alambda} with parameter $\lambda_0>0$. Then, for any $q\in\Qk$, it holds that
\[
\mathbf F(q^{\star},q)^2\le \frac{1+\lambda_0}{\lambda_0}(R(q)-R(q^{\star})).
\]
\end{theo}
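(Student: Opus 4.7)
The strategy is to leverage the absolute margin condition by comparing the excess risk of an arbitrary $q\in\Qk$ under $P$ with its counterpart under the perturbed law of $Y:=X+\lambda_0(X-q^\star(X))$. Two facts form the cornerstone: algebraically, $Y-q^\star(X)=(1+\lambda_0)(X-q^\star(X))$, and condition 1 of the absolute margin definition yields $q^\star(Y)=q^\star(X)$ almost surely, so that $Y-q^\star(Y)=(1+\lambda_0)(X-q^\star(X))$ almost surely as well.

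The algebraic backbone is the identity $|a|^2-|b|^2=|a-b|^2+2\langle b,a-b\rangle$ applied with $a=Z-q(X)$ and $b=Z-q^\star(X)$ (so $a-b=q^\star(X)-q(X)$). Setting
\[
E_1:=\mathbf F(q^\star,q)^2,\qquad E_2:=\esp\langle X-q^\star(X),q^\star(X)-q(X)\rangle,
\]
the choice $Z=X$ gives, after taking expectation, the exact identity $R(q)-R(q^\star)=E_1+2E_2$, while the choice $Z=Y$ combined with the cornerstone above gives
\[
\esp|Y-q(X)|^2-\esp|Y-q^\star(X)|^2=E_1+2(1+\lambda_0)E_2.
\]
Here $\esp|Y-q^\star(X)|^2=\|Y-q^\star(Y)\|^2$ by condition 1, while $\esp|Y-q(X)|^2\ge\|Y-q(Y)\|^2$ because $q$ is a nearest-neighbor quantizer and $q(X)$ is therefore a sub-optimal assignment for $Y$; hence
\[
\|Y-q(Y)\|^2-\|Y-q^\star(Y)\|^2\le E_1+2(1+\lambda_0)E_2.
\]
If the left-hand side is non-negative, that is, if $q^\star$ minimizes the distortion under the law of $Y$, then $E_2\ge -E_1/[2(1+\lambda_0)]$, and substituting back into $R(q)-R(q^\star)=E_1+2E_2$ produces
\[
R(q)-R(q^\star)\ge E_1-\frac{E_1}{1+\lambda_0}=\frac{\lambda_0}{1+\lambda_0}\,\mathbf F(q^\star,q)^2,
\]
which rearranges into the announced inequality.

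The main obstacle is therefore the claim that $q^\star$ realizes the minimum distortion of the law of $Y$. Two ingredients point in this direction. First, the center condition for $q^\star$ under $P$ transfers to the law of $Y$: on each cell one computes
\[
\esp[Y\mid X\in V_j(\mathbf c^\star)]=c_j^\star+(1+\lambda_0)\bigl(\esp[X\mid X\in V_j(\mathbf c^\star)]-c_j^\star\bigr)=c_j^\star,
\]
so $q^\star$ is a critical point of the distortion under the law of $Y$. Second, condition 2 of the absolute margin definition, applied along the family $Y_\lambda:=X+\lambda(X-q^\star(X))$ for $\lambda\in[0,\lambda_0]$, guarantees that each law $P_{Y_\lambda}$ admits a unique optimal $k$-quantizer $q_\lambda^{\mathrm{opt}}$, with $q_0^{\mathrm{opt}}=q^\star$ in particular. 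A continuity and compactness argument applied to $\lambda\mapsto q_\lambda^{\mathrm{opt}}$ (using joint continuity of $(\lambda,\mathbf c)\mapsto\esp\min_j|Y_\lambda-c_j|^2$ together with the uniform uniqueness supplied by condition 2) then propagates the identification $q_\lambda^{\mathrm{opt}}=q^\star$ along $[0,\lambda_0]$, giving in particular $q_{\lambda_0}^{\mathrm{opt}}=q^\star$ and closing the proof. This continuity step is the most delicate piece of the argument; everything else is algebraic.
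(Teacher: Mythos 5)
Your reduction is sound and is in fact equivalent to the paper's: the polarization identity you apply with $Z=X$ and $Z=Y=X_{\lambda_0}$ reproduces, after integration, exactly the paper's inequality $\mathbf F(q^{\star},q)^2\le \frac{1+\lambda}{\lambda}(R(q)-R(q^{\star}))+\frac{1}{\lambda}c_q(\lambda)$ with $c_q(\lambda)=\|X_{\lambda}-q^{\star}(X)\|^2-\|X_{\lambda}-q(X_{\lambda})\|^2$, and your remaining task --- showing that $q^{\star}$ is an optimal quantizer for the law of $Y_{\lambda}$ for $\lambda\le\lambda_0$, i.e.\ $c_\infty(\lambda_0)\le 0$ --- is precisely the paper's remaining task. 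So the architecture matches. The problem is that the one step you yourself flag as ``the most delicate piece'' is not actually proved, and the ingredients you list do not suffice to prove it. Knowing that (i) $q^{\star}$ satisfies the centroid condition for every $P_{Y_\lambda}$ (your conditional-mean computation), (ii) each $P_{Y_\lambda}$ has a unique optimal quantizer $q^{\mathrm{opt}}_\lambda$, and (iii) $\lambda\mapsto q^{\mathrm{opt}}_\lambda$ is continuous with $q^{\mathrm{opt}}_0=q^{\star}$, does \emph{not} propagate $q^{\mathrm{opt}}_\lambda=q^{\star}$ along $[0,\lambda_0]$: a continuously varying unique global minimizer can drift away from a point that remains a mere critical point for every $\lambda$. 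Continuity gives closedness of the set $\{\lambda: q^{\mathrm{opt}}_\lambda=q^{\star}\}$; what is missing is openness, i.e.\ a local rigidity statement.

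That rigidity statement is the content of the paper's Lemma \ref{lem:main}, and it is where condition 1 of Definition \ref{def:alambda} does its real work (you use it only to get $q^{\star}(Y)=q^{\star}(X)$ a.s.). Concretely: one must show that any optimal quantizer for $P_{Y_\lambda}$ lying within $F_1$-distance $\e$ of $q^{\star}$ shares the Voronoi cells of $q^{\star}$ on the support of $P_{Y_\lambda}$. This uses the fact that $P(A(\lambda_0))=1$ translates into the strict inequality $2(1+\lambda_0)\langle X-c^{\star}_i,c^{\star}_j-c^{\star}_i\rangle\le|c^{\star}_i-c^{\star}_j|^2$ on $\{q^{\star}(X)=c^{\star}_i\}$, which for $\lambda\le\lambda_1<\lambda_0$ leaves a quantitative gap absorbing an $\e$-perturbation of the centers (the strict separation $\lambda_1<\lambda_0$ is essential and absent from your sketch). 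Only once the cells are known to coincide does your centroid computation apply cell by cell and force the centers to be equal. Without this step your argument establishes only that $q^{\star}$ is a critical point of the distortion of $P_{Y_{\lambda_0}}$, not that it is its minimizer, and the announced inequality does not follow. To complete the proof you would need to state and prove this rigidity lemma (or an equivalent), then run the connectedness argument of Lemma \ref{lem:nonuniq}.
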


\begin{rem}
The above theorem states that the clustering scheme is strongly stable for $\mathbf F^2$ provided the absolute margin condition holds. Here, we briefly argue that this result is optimal in the sense that strong stability requires that both hypotheses of the absolute margin condition \ref{def:alambda} hold in general.
\begin{enumerate}
\item The following example shows that the first point of the absolute margin condition cannot be dropped.
Take $P$ uniform on $[-1,1]\times [-1/2,1/2]$ and fix $k=2$. Then the first point of the absolute margin condition is clearly not satisfied.
The codebook \[\mathbf c^\star=\{(-1/2,0),(1/2,0)\}\] defines the unique optimal quantizer.
For $\e >0$, consider now \[\mathbf c_\e=\{(-1/2,\e),(1/2,-\e)\}.\]
Then it can be checked through straightforward computations that $\mathbf F(q^\star,q_{\e})=\e$ and that $R(q_\e)-R(q^\star)\le \e^2$, so that there exists no $\lambda>0$ for which inequality \[\mathbf F(q^\star,q_{\e})^2\le \frac{1+\lambda}{\lambda}(R(q_\e)-R(q^\star))\] 
holds for all $\e>0$.\vspace{0.2cm}
\item If there is not uniqueness of an optimal quantizer of $P$, then the result clearly cannot hold. 
Although, this uniqueness property does not suffice.
To illustrate this statement, suppose $P$ is defined by $P=(\mu_1+\mu_2)/2$ where $\mu_1$ is uniform on $[-1;1]\times\{1\}$ and $\mu_2$ is uniform on $[-1;1]\times\{-1\}$.
For $k=2$, the codebook 
\[\mathbf c^{\star}=\{(0,1),(0,-1)\}\]
defines the unique optimal quantizer for $P$. The distribution $P$ satisfies the first point of the absolute margin condition for any $\lambda>0$, but fails to satisfies the second point for large $\lambda$.
In particular, it follows from details in the proof of Theorem \ref{thm:absmarg} that the desired inequality cannot hold for large $\lambda$.
\end{enumerate}
\end{rem}

An interesting consequence of Theorem \ref{thm:absmarg} holds in the context of empirical measures for which the absolute margin condition always holds. Consider a sample $X_1,\dots,X_n$ composed of i.i.d. variables with distribution $P$ and let 
\[P_n=\frac{1}{n}\sum_{i=1}^{n}\delta_{X_i}.\]
The next result ensures that an $\e$-empirical risk minimizer (i.e. a quantizer $q_\e$ such that $R_n(q_\e)\le \inf_q R_n(q)+\e$) is at a distance (in terms of $\mathbf F$) at most $\e(1+\lambda)/\lambda$ to an empirical risk minimizer for some $\lambda$ depending only on $P_n$.

\begin{cor}
\label{cor:pn}
Let $\e>0$.
Let $P_n$ be the empirical measure of a measure
$P$, associated with sample $\{X_1,\dots,X_n\}$.
Suppose $P_n$ has a  unique optimal quantizer $\hat q$.
Then $P_n$ satisfies the absolute margin condition for some $\lambda_n>0$. In addition, if $q_{\e}\in\Qk$ satisfies
\[
\frac{1}{n}\sum_{i=1}^n|X_i-q_{\e}(X_i)|^2\le \e+\frac{1}{n}\sum_{i=1}^n|X_i-\hat q(X_i)|^2,
\]
then
\[
\mathbf F(\hat q,q_{\e})^2\le \frac{1+\lambda_n}{\lambda_n} \e.
\]
\end{cor}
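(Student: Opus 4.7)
The strategy is to verify that $P_n$ satisfies the absolute margin condition of Definition \ref{def:alambda} for some $\lambda_n>0$, and then invoke Theorem \ref{thm:absmarg} applied to $P_n$ with $\hat q$ playing the role of $q^\star$. Since by assumption $\hat R(q_\e)\le\hat R(\hat q)+\e$, the conclusion of the theorem reads $\mathbf F(\hat q,q_\e)^2\le\tfrac{1+\lambda_n}{\lambda_n}\e$ (with $\mathbf F$ and $\hat R$ computed against $P_n$), which is exactly the desired bound. So the entire content of the corollary lies in establishing the absolute margin condition for $P_n$.

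For the first point of Definition \ref{def:alambda}, I would argue that uniqueness of $\hat q$ forbids any data point $X_i$ with $X_i\ne\hat q(X_i)$ from lying on the frontier $\mathcal F(\hat{\mathbf c})$: were this the case, an NN quantizer differing from $\hat q$ only at $X_i$ (via reassignment to an equidistant centroid) would achieve the same empirical risk, contradicting uniqueness. Consequently each $X_i$ is either a fixed point of $\hat q$ or strictly interior to its Voronoi cell, and by finiteness of the sample one may choose $\lambda_n>0$ small enough that, for every $i$, the extrapolated point $X_i+\lambda_n(X_i-\hat q(X_i))$ remains inside the same cell. This yields $P_n(A(\lambda_n))=1$.

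For the second point, the plan is a parametric stability argument. Any $Y$ satisfying $\|X-Y\|\le\lambda_n\|X-\hat q(X)\|$ (with $X\sim P_n$) has distribution $\mu_Y$ within $W_2$-distance $\lambda_n\|X-\hat q(X)\|$ of $P_n$, so the task reduces to uniqueness of the optimal codebook for measures sufficiently close to $P_n$ in $W_2$. I would combine (a) a coercivity bound confining candidate optima to a fixed compact subset of $E^k$, (b) joint continuity of $(\mathbf c,\mu)\mapsto R_\mu(\mathbf c)$, and (c) uniqueness of $\hat{\mathbf c}$ for $P_n$, to conclude that any optimal codebook of $\mu_Y$ lies in an arbitrarily small neighborhood of $\hat{\mathbf c}$ as $\lambda_n$ shrinks. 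On that neighborhood, the NN assignment of each $X_i$ is frozen by the previous step, so near $\hat{\mathbf c}$ the function $R_{\mu_Y}$ behaves like a positive-definite quadratic (each cluster of $\hat{\mathbf c}$ under $P_n$ being nonempty), enforcing local uniqueness; combined with the global gap around $\hat{\mathbf c}$ inherited from $P_n$, this rules out any other optimizer.

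The principal obstacle is the second point: one must carefully stitch together coercivity, uniform compactness of the space of near-optima, and the local quadratic structure of the risk in order to propagate uniqueness from $P_n$ to nearby measures, with the additional complication that $\mu_Y$ need not be supported on the sample, so one must also control the small amount of $\mu_Y$-mass that may lie in regions where the NN assignment shifts away from that of $\hat{\mathbf c}$. Once the absolute margin condition is secured for some $\lambda_n>0$ depending only on $P_n$, the corollary follows as a direct specialization of Theorem \ref{thm:absmarg}.
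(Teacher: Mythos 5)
Your overall route is exactly the paper's: verify the absolute margin condition (Definition \ref{def:alambda}) for $P_n$ and then specialize Theorem \ref{thm:absmarg} to the empirical measure, with $\hat q$ in the role of $q^{\star}$ and the hypothesis $\hat R(q_{\e})\le \hat R(\hat q)+\e$ feeding directly into the excess-risk bound. The paper's own ``proof'' is a one-sentence remark: it cites Theorem 4.2 of Graf and Luschgy to get $P_n(\mathcal F(\hat{\mathbf c}))=0$, notes that finiteness of the sample then yields $P_n(A(\lambda_n))=1$ for some $\lambda_n>0$, and stops there. Your argument for this first point --- that a sample point equidistant from two centroids would permit a reassignment producing a second NN quantizer with the same empirical risk, contradicting uniqueness of $\hat q$ --- is an acceptable elementary substitute for that citation, and the passage from ``no $X_i$ on the frontier'' to a positive $\lambda_n$ by finiteness of the sample is the same as the paper's.

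Where you go beyond the paper is the second point of Definition \ref{def:alambda} (uniqueness of the optimal quantizer for every law of a $Y$ with $\|X-Y\|\le\lambda_n\|X-\hat q(X)\|$), which the paper's proof sketch does not address at all. You are right that this is where the real content lies, but your sketch does not close it: uniqueness of a minimizer is not in general stable under perturbation of the measure, and the local strict convexity you invoke is only clear for the empirical risk restricted to codebooks near $\hat{\mathbf c}$ with frozen assignments --- for the law of $Y$, which need not charge the sample points, mass can sit arbitrarily close to the cell boundaries, so the risk is not a fixed positive-definite quadratic on the relevant neighborhood, and your step (c) would need a genuine quantitative argument rather than continuity alone. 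You flag this difficulty yourself; the honest conclusion is that both your write-up and the paper leave this second condition unverified, and on everything the paper actually proves your proposal follows the same approach.
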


The last result follows easily from Theorem 4.2 in \citet{GrLu00} (stating that $P_n(\mathcal F(\hat\bc))=0$, for $\hat{\mathbf c}=\hat q(E)$, and thus $P_n(A(\lambda))=1$ for some $\lambda>0$) and from Theorem \ref{thm:absmarg}.
The proof is therefore omitted for brevity. The interpretation of this corollary is that any algorithm producing a quantizer $q$ with small empirical risk $\hat R(q)$ will be, automatically, such that $\mathbf F(\hat q, q)$ is small (and again, provided uniqueness of $\hat q$) if $\lambda_n$ is large. Parameter $\lambda_n$ defined by the absolute margin condition, thus provides a key feature for stability of the $k$-means algorithm.
A nice property of the previous result is that $\lambda_n$ is of course independent of the $\e$-minimizer $q_{\e}$.
However, an important remaining question, of large practical value, is to lower bound $\lambda_n$ with large probability to assess the size of the coefficient $(1+\lambda_n)/\lambda_n$. This is left for future research. 



\subsection{Comparing notions of stability}\label{sub:comp}

This subsection describes some relationships existing between the function $\mathbf F$ involved in our main result, with the two functions $F_1$ and $F_2$ mentioned earlier in section \ref{sec:introstab}. Below, we restrict attention to the case where there is a unique optimal quantizer $q^\star$. Comparing $\mathbf F$ and $F_2$ can be done straightforwardly. Let  \[m=\inf_{i\ne j}|c^\star_i-c^\star_j|\et M=\sup_{i\ne j}|c^\star_i-c^\star_j|.\] 
Observe that, for $F_1(q^{\star},q)$ small enough, the permutation reaching the minimum in the definitions of $F_1$ and $F_2$ is the same and can be assumed to be the identity without loss of generality. Then, it follows that, for $F_1(q^{\star},q)$ small enough, 
\begin{align*}
\mathbf F(q^\star,q)^2&=\sum_{i,j=1}^kP(V_i(\bcs)\cap V_j(\bc))|c^\star_i-c_j|^2\\
&\le \sum_{i=1}^kP(V_i(\bcs)\cap V_i(\bc))|c^\star_i-c_i|^2+ \sum_{i\ne j}P(V_i(\bcs)\cap V_j(\bc))(|c^\star_j-c_j|+M)^2\\
&\le  F_1(q^\star,q)^2 + F_2(q^\star,q)(F_1(q^\star,q)+M)^2,
\end{align*}
and similarly, when $m\ge F_1(q^\star,q)$,
\begin{align*}
\mathbf F (q^\star,q)^2&\ge \sum_{i=1}^kP(V_i(\bcs)\cap V_i(\bc))|c^\star_i-c_i|^2+ \sum_{i\ne j=1}^kP(V_i(\bcs)\cap V_j(\bc))(m-|c^\star_j-c_j|)^2\\
&\ge  F_2(q^\star,q)(m-F_1(q^\star,q))^2.
\end{align*}
This two inequalities imply that $\mathbf F^2$ and $F_2$ are comparable whenever $F_1$ is small enough.

Comparing $F_1$ and $\mathbf F$ requires more effort, although one inequality is also quite straightforward.
Recall the notation $p_{\min}=\inf_iP(V_i(\bcs))$.
Suppose again that the optimal permutation in the definition of $F_1$ is the identity.
Then, remark that $F_1(q^\star,q)\le m/2$, implies $|c_i^\star-c_i|\le |c_i^\star-c_j|$, for all $i,j$. Thus, in this case,
\begin{align*}
\mathbf F(q^\star,q)^2&=\mathbf E |q^\star(X)-q(X)|^2\\
&=\sum_{i,j=1}^k P(V_i(\bcs)\cap V_j(\bc))|c^\star_i-c_j|^2\\
&\ge\sum_{i=1}^k \sum_{j=1}^k P(V_i(\bcs)\cap V_j(\bc))|c^\star_i-c_i|^2\\
&\ge p_{\min}F_1(q^\star,q)^2.
\end{align*}

In view of providing a more detailled result, we define the function $p^\star$, similar in nature to the function $p$ introduced by \cite{Le15} and defined in \ref{def:marglev}.
\begin{defi}\label{def:pstar}
For a metric space $(E,d)$ and a probability measure $P$ on $E$, let $X$ be a random variable of law $P$.
Denote $q^\star$ an optimal quantizer of $P$ with image $\bcs=\{c^\star_1,\dots,c^\star_k\}$ and $\partial V_i(\bcs)$ the frontier of the Voronoi cell associated to $c_i$.
Then, for all $t>0$, we let
\[
p^\star(t):=\mathbf{P}\left(\bigcup_{i=1}^k\left\{ md(X,\partial V_i(\bcs))\le 2d(X,q^\star(X)) t +2t^2\right\}\right),
\]
where $m=\inf_{i\ne j}|c^\star_i-c^\star_j|$.
\end{defi}
While $p(t)$ corresponds to the probability of the $t$-inflated frontier of the Voronoi cells (defined in Definition \ref{def:marglev}), $p^\star(t)$ corresponds to a similar object in which the inflation of the frontier gets larger as the points go further from their representant in the codebook $\bf c^{\star}$. These two functions can thus differ significantly, in general.
However, since $m/4\le d(X,q^\star(X))$ for $X$ such that $d(X,\partial V_i(\bcs))<m/4$, it follows that
\[
p(t)\le p^\star\left(2t\right),
\]
whenever $0<t<m/4$.
And when the probability measure $P$ has its support in a ball of diameter $R>0$, it can be readily seen that for all $t>0$
\[
p^\star(t)\le p\left(m^{-1}\left[2R t + 2t^2\right]\right).
\]
If the support of $P$ is not contained in a ball, the comparison is not as straightforward.

We can now state the last comparison inequality.
\begin{pro}\label{thm:pstar}
Under the same setting as in the Definition \ref{def:pstar},
\[
\mathbf F(q^\star,q)^2\le F_1(q^\star,q)^2+p^\star(F_1(q^\star,q))(M+F_1(q^\star,q))^2
\]
\end{pro}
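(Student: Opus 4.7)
Following the identification used earlier in the subsection, I would first assume (since the result is only non-trivial when $F_1(q^\star,q)$ is small) that the optimal permutation in the definition of $F_1$ is the identity, and write $t:=F_1(q^\star,q)$, so $|c_i^\star-c_i|\le t$ for every $i$. Invoking the decomposition
\[
\mathbf F(q^\star,q)^2=\sum_{i,j=1}^k P(V_i(\bcs)\cap V_j(\bc))\,|c_i^\star-c_j|^2
\]
already used in the text, I would split the sum according to whether $i=j$. The diagonal terms contribute at most $t^2\sum_i P(V_i(\bcs))=t^2$, and for $i\ne j$ the inequality $|c_i^\star-c_j|\le|c_i^\star-c_j^\star|+|c_j^\star-c_j|\le M+t$ yields
\[
\mathbf F(q^\star,q)^2\le t^2+(M+t)^2\sum_{i\ne j}P(V_i(\bcs)\cap V_j(\bc)).
\]

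The core of the proof is then to bound $\sum_{i\ne j}P(V_i(\bcs)\cap V_j(\bc))\le p^\star(t)$, by showing that any point $X\in V_i(\bcs)\cap V_j(\bc)$ with $i\ne j$ lies in the $i$-th set appearing in Definition~\ref{def:pstar}. Combining the nearest-neighbor bound $|X-c_j|\le|X-c_i|$ with the triangle inequalities $|X-c_j^\star|\le|X-c_j|+t$ and $|X-c_i|\le|X-c_i^\star|+t$ gives
\[
0\le|X-c_j^\star|-|X-c_i^\star|\le 2t,
\]
hence
\[
|X-c_j^\star|^2-|X-c_i^\star|^2\le 2t\bigl(2|X-c_i^\star|+2t\bigr)=4t\,d(X,q^\star(X))+4t^2.
\]

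I would then use the standard formula for the distance from $X$ to the perpendicular bisector $H_{ij}$ of $c_i^\star$ and $c_j^\star$ (valid because $X\in V_i(\bcs)$ means $X$ is on the $c_i^\star$-side of $H_{ij}$):
\[
d(X,H_{ij})=\frac{|X-c_j^\star|^2-|X-c_i^\star|^2}{2|c_i^\star-c_j^\star|}\le\frac{2t\,d(X,q^\star(X))+2t^2}{m}.
\]
The key geometric step is the inequality $d(X,\partial V_i(\bcs))\le d(X,H_{ij})$: letting $y$ be the orthogonal projection of $X$ onto $H_{ij}$, either $y\in V_i(\bcs)$, in which case $y\in\partial V_i(\bcs)$ because $|y-c_i^\star|=|y-c_j^\star|$, or the segment $[X,y]$ leaves $V_i(\bcs)$ at a point of $\partial V_i(\bcs)$ whose distance from $X$ is at most $|X-y|=d(X,H_{ij})$. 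Either way we obtain $m\,d(X,\partial V_i(\bcs))\le 2t\,d(X,q^\star(X))+2t^2$, placing $X$ in the $i$-th set of Definition~\ref{def:pstar}.

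Summing over $i$ using that the cells $V_i(\bcs)$ are disjoint up to a $P$-null set (so that each $X$ with $q^\star(X)\ne q(X)$ is counted at most once) delivers $\sum_{i\ne j}P(V_i(\bcs)\cap V_j(\bc))\le p^\star(t)$, and combining with the first display finishes the proof. The only genuinely subtle point is the geometric inequality $d(X,\partial V_i(\bcs))\le d(X,H_{ij})$, since $\partial V_i(\bcs)$ is only a subset of the union of the pairwise bisector hyperplanes; the short convexity/continuity argument above handles it, and the rest of the proof is bookkeeping.
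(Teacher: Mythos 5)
Your proof is correct and follows essentially the same route as the paper's: the same decomposition of $\mathbf F(q^\star,q)^2$ over the cells $V_i(\mathbf c^\star)\cap V_j(\mathbf c)$, the same distance-to-bisector formula, and the same chain of triangle inequalities (you factor the difference of squares where the paper expands squares, but the resulting bound $4t\,d(X,q^\star(X))+4t^2$ is identical). The only substantive difference is that you explicitly justify $d(X,\partial V_i(\mathbf c^\star))\le d(X,H_{ij})$ via the projection argument, a step the paper simply asserts as $d(x,\partial V_i(\mathbf c^{\star}))=\min_{j\ne i}d(x,h^{\star}_{i,j})$; that is a welcome addition, not a deviation.
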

A consequence of this proposition and the result of \cite{Le15} recalled in Theorem \ref{theoLevrard} is the following
\begin{cor}
Under the conditions of Theorem \ref{theoLevrard}, 
\[
\mathbf F(q^\star,\hat q)^2=\mathcal{O}\left(\frac{1}{n}\right)+p^\star\left(\mathcal{O}\left(\frac{1}{\sqrt{n}}\right)\right),
\]
for any empirical risk minimizer $\hat q$.
\end{cor}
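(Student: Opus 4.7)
The plan is to mimic the decomposition carried out earlier in Subsection \ref{sub:comp} when comparing $\mathbf F$, $F_1$ and $F_2$, and then to control the mass of the ``misclassified'' set via the geometry captured by $p^\star$. After relabeling so that the permutation achieving the minimum in the definition of $F_1(q^\star,q)$ is the identity (and writing $F_1:=F_1(q^\star,q)$ for brevity), one has $|c^\star_i-c_i|\le F_1$ for every $i$. I would split the integral defining $\mathbf F(q^\star,q)^2$ according to which center is assigned by $q^\star$ and by $q$, and estimate the ``diagonal'' terms ($q^\star(x)=c^\star_i$, $q(x)=c_i$) by $F_1^2$, and the ``off-diagonal'' ones ($q^\star(x)=c^\star_i$, $q(x)=c_j$ with $j\ne i$) using the triangle inequality $|c^\star_i-c_j|\le |c^\star_i-c^\star_j|+|c^\star_j-c_j|\le M+F_1$. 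This immediately gives
\[
\mathbf F(q^\star,q)^2 \le F_1^2 + (M+F_1)^2\,\sum_{i\ne j}P\bigl(\{q^\star=c^\star_i,\, q=c_j\}\bigr).
\]

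The key step is then to show that the total misclassification mass is at most $p^\star(F_1)$. I would argue pointwise: for $x$ with $q^\star(x)=c^\star_i$ and $q(x)=c_j$ with $j\ne i$, the bound $|x-c_j|\le |x-c_i|$ combined with $|c^\star_\ell-c_\ell|\le F_1$ yields $|x-c^\star_j|\le |x-c^\star_i|+2F_1$. Since we are in a Hilbert space, the distance from $x\in V_i(\bcs)$ to the separating hyperplane $H_{ij}=\{y:|y-c^\star_i|=|y-c^\star_j|\}$, which is contained in $\partial V_i(\bcs)$, equals
\[
\frac{|x-c^\star_j|^2-|x-c^\star_i|^2}{2|c^\star_i-c^\star_j|}.
\]
Factoring the numerator as a difference of squares and using both $|x-c^\star_j|-|x-c^\star_i|\le 2F_1$ and $|x-c^\star_j|+|x-c^\star_i|\le 2d(x,q^\star(x))+2F_1$, together with $|c^\star_i-c^\star_j|\ge m$, produces
\[
m\, d(x,\partial V_i(\bcs)) \le 2F_1\, d(x,q^\star(x)) + 2F_1^2,
\]
which is precisely the condition placing $x$ in the event defining $p^\star(F_1)$ in Definition \ref{def:pstar}. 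Summing over misclassification pairs $(i,j)$ yields $\sum_{i\ne j}P(\{q^\star=c^\star_i,q=c_j\})\le p^\star(F_1)$ and combining with the decomposition above gives the announced inequality.

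The main obstacle is this geometric link: one must identify the right hyperplane (the one between the \emph{true} cell of $x$ and the cell to which $q$ assigns $x$), apply the Hilbert-space formula for the distance to it, and then factor the resulting difference of squares so that the two factors $d(x,q^\star(x))$ and $1/m$ appear with the correct constants. This is exactly what forces the form of the event appearing in Definition \ref{def:pstar}; every remaining step is essentially the elementary decomposition already used in Subsection \ref{sub:comp}.
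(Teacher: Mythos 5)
Your argument is a correct re-derivation of Proposition \ref{thm:pstar} (your factoring of the difference of squares replaces the expansion in \eqref{eq:forpstar}, but the constants and the containment in the event defining $p^\star(F_1)$ come out the same). The difficulty is that the statement to be proved is the \emph{corollary}, not the proposition, and your proof stops exactly where the corollary begins. The corollary asserts rates in $n$ for an empirical risk minimizer $\hat q$; nowhere in your proposal do $n$, $\hat q$, or the risk bound of Theorem \ref{theoLevrard} appear. What you have established is a deterministic comparison $\mathbf F(q^\star,q)^2\le F_1(q^\star,q)^2+p^\star(F_1(q^\star,q))(M+F_1(q^\star,q))^2$ for an arbitrary $q\in\Qk$; to obtain the corollary you must still explain why $F_1(q^\star,\hat q)=\mathcal O(1/\sqrt n)$, so that the first term is $\mathcal O(1/n)$ and the second is $p^\star(\mathcal O(1/\sqrt n))\cdot\mathcal O(1)$. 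That step is entirely missing, and it is the actual content of the corollary.

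The missing chain, which is how the paper intends the corollary to follow (``a consequence of this proposition and the result of \citet{Le15} recalled in Theorem \ref{theoLevrard}''), is: (i) under the hypotheses of Theorem \ref{theoLevrard}, $R(\hat q)-R(q^\star)=\mathcal O(1/n)$ with high probability; (ii) the local stability inequality of \citet{Le15} quoted in Subsection \ref{sec:introstab}, namely $F_1(q^\star,q)^2\le \tfrac{p_{\min}}{2}\,(R(q)-R(q^\star))$ valid once $F_1(q^\star,q)\le Br_0/4\sqrt{2}M$, converts this into $F_1(q^\star,\hat q)^2=\mathcal O(1/n)$, hence $F_1(q^\star,\hat q)=\mathcal O(1/\sqrt n)$; (iii) to enter the regime where (ii) applies one invokes the almost sure consistency $F_1(q^\star,\hat q)\to 0$ recalled after Remark \ref{rem:haussdorf}. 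Only after these three steps does substituting into Proposition \ref{thm:pstar} give the announced bound. You should add this (short) argument; the geometric work you did, while correct, duplicates a result that the corollary is entitled to quote.
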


\section{Proofs} This section gathers the proofs of the main results of the paper. Additional proofs are postponed to the appendices.

\subsection{Proof of Theorem \ref{thm:absmarg}}
Recall that $E$ is a Hilbert space with scalar product $\langle.,.\rangle$, norm $|.|$ and that, for an $E$-valued random variable $Z$ with square integrable norm, we denote $\|Z\|^2=\esp|Z|^2$ for brevity. For $\lambda>0$, set 
\[x_{\lambda}=x+\lambda(x-q^\star(x)).\] 
As $E$ is a Hilbert space, we have for all $y,z\in E$ and all $t\in [0,1]$, 
\[|ty+(1-t)z|^2=t|y|^2+(1-t)|z|^2+t(1-t)|y-z|^2.\]
Now for all $x\in E$, any quantizer $q\in\Qk$ and any $\lambda >0$, using the previous inequality with $y=x_{\lambda}-q(x)$, $z=q^{\star}(x)-q(x)$ and $t=(1+\lambda)^{-1}$, it follows that 
\begin{align*}
|q^\star(x)-q(x)|^2&=\frac{1+\lambda}{\lambda}(|x-q(x)|^2-|x-q^\star(x)|^2)+\frac{|x_{\lambda}-q^{\star}(x)|^2-|x_{\lambda}-q(x)|^2}{\lambda}\\
&\le \frac{1+\lambda}{\lambda}(|x-q(x)|^2-|x-q^\star(x)|^2)+\frac{|x_{\lambda}-q^{\star}(x)|^2-|x_{\lambda}-q(x_{\lambda})|^2}{\lambda},
\end{align*}
where the last inequality follows from the fact that $q$ is a nearest neighbor quantizer. Integrating this inequality with respect to $P$, we obtain
\begin{equation}
\label{eq:equalitygeo}
\mathbf F(q^{\star},q)^2\le \frac{1+\lambda}{\lambda}(R(q)-R(q^{\star}))+\frac{1}{\lambda}c_{q}(\lambda),
\end{equation}
where we have denoted 
\[
c_{q}(\lambda):=\|X_{\lambda}-q^{\star}(X)\|^2-\|X_{\lambda}-q(X_\lambda)\|^2.
\]
Observe that $\lambda\mapsto c_q(\lambda)$ is continuous.
Now, define 
\[
c_\infty(\lambda):=\sup_q c_q(\lambda),
\]
where the supremum is taken over all $k$-points quantizers $q\in\Qk$.
The function $\lambda\mapsto c_\infty(\lambda)$ satisfies obviously $c_\infty(\lambda)\ge c_{q^\star}(\lambda)\ge 0$, for all $\lambda>0$. To prove the theorem, we will show that $c_\infty(\lambda_0)\le0$, whenever $P$ satisfies the absolute margin condition with paramater $\lambda_0>0$. To that aim, we provide two auxiliary results.

\begin{lem}\label{lem:main}
Suppose there exists $R>0$ such that $P(B(0,R))=1$.
For all $\lambda>0$, denote $q_\lambda$ any quantizer such that $c_{q_\lambda}(\lambda)=c_\infty(\lambda)$ and denote $q^\star$ an optimal quantizer of the law of $X$.
Suppose the absolute margin condition holds for $\lambda_0>0$.
Then, for all $0<\lambda_1<\lambda_0$, there exists $\e>0$ such that for all $0<\lambda<\lambda_1$, if $F_1(q_\lambda,q^\star)< \e$, then
\[
q^\star=q_\lambda.
\]
\end{lem}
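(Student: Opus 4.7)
The plan is to prove $\bc_\lambda = \bcs$, which immediately yields $q_\lambda = q^\star$. Since $\|X_\lambda - q^\star(X)\|^2$ is independent of $q$, the quantizer $q_\lambda$ maximizing $c_q(\lambda)$ is precisely a minimizer of $q\mapsto \|X_\lambda - q(X_\lambda)\|^2$; applying condition (2) of the absolute margin definition with $Y = X_\lambda$ (valid because $\|X - X_\lambda\| = \lambda\|X - q^\star(X)\| \le \lambda_0\|X - q^\star(X)\|$ for $\lambda \le \lambda_0$) makes this minimizer unique, so the codebook $\bc_\lambda = q_\lambda(E)$ satisfies the centroid condition $c_{\lambda,j} = \esp[X_\lambda \mid X_\lambda \in V_j(\bc_\lambda)]$ for every $j$. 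The strategy is to argue that, under $F_1(q_\lambda,q^\star) < \e$ with $\e$ small enough (depending on $\lambda_1$ but not on $\lambda$), the Voronoi assignment of $X_\lambda$ under $\bc_\lambda$ coincides $P$-a.s.\ with that of $X$ under $\bcs$; once this holds, abs margin (1) combined with the center condition \eqref{ccond} for $q^\star$ gives
\[
c_{\lambda,j} = \esp[X_\lambda \mid q^\star(X) = c^\star_j] = (1+\lambda)\esp[X \mid q^\star(X) = c^\star_j] - \lambda c^\star_j = c^\star_j,
\]
and therefore $\bc_\lambda = \bcs$.

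The crux of the proof is a quantitative positive-margin estimate. For $P$-a.e.\ $X$, set $j = q^\star(X)$ and $v = X - c^\star_j$. Since $P(A(\lambda_0)) = 1$, the relation $X_{\lambda_0} \in V_j(\bcs)$ unfolds, for each $i\ne j$, as
\[
2(1+\lambda_0)\langle v, c^\star_j - c^\star_i\rangle + |c^\star_j - c^\star_i|^2 \ge 0.
\]
Substituting the resulting lower bound on $\langle v, c^\star_j - c^\star_i\rangle$ into the analogous expression at $\lambda\le\lambda_1<\lambda_0$ yields
\[
|X_\lambda - c^\star_i|^2 - |X_\lambda - c^\star_j|^2 \ge |c^\star_j - c^\star_i|^2\,\frac{\lambda_0 - \lambda_1}{1+\lambda_0} \ge \frac{m^2(\lambda_0-\lambda_1)}{1+\lambda_0},
\]
with $m = \min_{i\ne j}|c^\star_i - c^\star_j|$. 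Since $\supp(P)\subset B(0,R)$ and any optimal center lies in the convex hull of the support, $|X_\lambda - c^\star_i| + |X_\lambda - c^\star_j|$ is uniformly bounded by a constant $K$ depending only on $R$ and $\lambda_1$; factoring the left-hand side as a difference of squares produces a uniform $P$-a.s.\ lower bound
\[
|X_\lambda - c^\star_i| - |X_\lambda - c^\star_j| \ge \mu := \frac{m^2(\lambda_0-\lambda_1)}{K(1+\lambda_0)} > 0.
\]

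To conclude, choose $\e = \mu/2$. If $F_1(q_\lambda, q^\star) < \e$, relabel so that the optimal matching is the identity; then $|c_{\lambda,i} - c^\star_i| < \e$ for every $i$, and two triangle inequalities give, for $P$-a.e.\ $X$ (writing $j = q^\star(X)$),
\[
|X_\lambda - c_{\lambda,j}| - |X_\lambda - c_{\lambda,i}| \le (|X_\lambda - c^\star_j| - |X_\lambda - c^\star_i|) + 2\e \le -\mu + 2\e = 0,
\]
so $q_\lambda(X_\lambda) = c_{\lambda,j}$ exactly when $q^\star(X) = c^\star_j$; the Voronoi partitions therefore agree $P$-a.s., and the centroid condition collapses to $c_{\lambda,j} = c^\star_j$ for every $j$. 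The main obstacle is the margin step: the strict gap $\lambda_1 < \lambda_0$ is essential, since without it the points $X$ whose image $X_{\lambda_0}$ sits on the frontier of $\bcs$ can produce $X_\lambda$ arbitrarily close to that frontier, ruining any attempt at a uniform margin $\mu$.
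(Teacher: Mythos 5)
Your proof is correct and follows essentially the same route as the paper's: use $P(A(\lambda_0))=1$ to get the separation inequality at $\lambda_0$, transfer it to a uniform margin for $\lambda\le\lambda_1<\lambda_0$, deduce that centers within $\e$ of $\mathbf c^\star$ induce the same cells $P$-a.s., and invoke the center condition to force $\mathbf c_\lambda=\mathbf c^\star$. Your write-up is in fact slightly more explicit than the paper's at two points — you exhibit the margin $\mu$ and hence $\e$ concretely where the paper only asserts existence of $\e(\lambda_0,\lambda_1,R,\cdot)$, and you close by computing the centroids of $X_\lambda$ over the shared cells rather than via the paper's risk-comparison chain $\|X_\lambda-q_\lambda(X_\lambda)\|^2\ge\|X_\lambda-q^\star(X)\|^2$ — but the underlying argument is the same.
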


\begin{proof}[Proof of lemma \ref{lem:main}]
The main idea of the proof is that since the Voronoi cells are well separated (inflated borders are with probability $0$), when a quantizer is close enough to the optimal one, it shares its Voronoi cell (on the support of $P$) and thus, centroid condition requires that quantizer have to be centroid of its cell to be optimal.

Set $q_{\lambda}(E)=\{c_1,...,c_k\}$ and $\{c_1^\star,\dots,c_k^\star\}=q^\star(E)$.
Suppose without loss of generality that the optimal permutation in the definition of $F_1$ is the identity.
The assumption implies that, with probability one, for each $1\le i \le k$, on the event $q^\star(X)=c_i^\star$, the inequality $|X_{\lambda_0}-c^\star_i|^2\le |X_{\lambda_0}-c^\star_j|^2$ holds, or equivalently
\begin{equation}\label{eq:lambdaineg}
2(1+{\lambda_0})\langle X-c_i^\star,c_j^\star-c_i^\star\rangle \leq|c_i^\star-c_j^\star|^2.
\end{equation}

However,
\begin{align*}
|X_{\lambda_0}-c_i|^2 =& (1+{\lambda_0})^2|X-c_i^\star|^2+|c_i^\star-c_i|^2+2(1+{\lambda_0})\langle X-c_i^\star,c_i^\star-c_i\rangle \\ 
|X_{\lambda_0}-c_j|^2 =& (1+{\lambda_0})^2|X-c_i^\star|^2+|c_i^\star-c_j|^2+2(1+{\lambda_0})\langle X-c_i^\star,c_i^\star-c_j\rangle  
\end{align*}
so that $|X_{\lambda_0}-c_i|^2\le |X_{\lambda_0}-c_j|^2$ if
\[
2(1+{\lambda_0})\langle X-c^\star_i,c_j-c_i\rangle \leq |c_i^\star-c_j|^2-|c_i^\star-c_i|^2.
\]
Since \eqref{eq:lambdaineg} holds, for all $\lambda_1 < \lambda_0$, there exists therefore $\e=\e({\lambda_0},\lambda_1,R,\max\{|c_i^\star-c_j^\star|:i\ne j\})$ such that, if $F_1(q^\star,q)< \e$, then for all $\lambda\le\lambda_1$,
\[
|X_{\lambda}-c_i|^2< |X_{\lambda}-c_j|^2,
\]
on the event $q^\star(X)=c_i^\star$.
As a result,
\[
\mathbf P\left(\bigcup_{i=1}^k\{q^{\star}(X)=c^\star_i\}\cap \{ q_{\lambda}(X_\lambda)=c_i\}\right)=1.
\]
This means that $q^\star$ and $q_{\lambda}$ share the same cells on the support of $P$.
Thus, 
\begin{align}
\|X_\lambda-q_\lambda(X)\|^2& =(1+\lambda)^2\sum_{i=1}^k\mathbf E\mathbf 1_{\left\{q^\star(X)=c^\star_i\right\}} \left|X-\frac{\lambda c^\star_i + c_i}{1+\lambda}\right|^2\nonumber\\
&\ge (1+\lambda)^2 \sum_{i=1}^k \mathbf{E} \mathbf 1_{\left\{q^\star(X)=c^\star_i\right\}} \left|X- c^\star_i\right|^2\label{eq:ineqcentr}\\
& = \|X_\lambda-q^{\star}(X)\|^2,
\nonumber
\end{align}
where inequality \eqref{eq:ineqcentr} follows from the center condition \eqref{ccond}.
Therefore, since $q_\lambda$ minimizes $\|X_\lambda-q(X_\lambda)\|^2$ amongst NN quantizers, \eqref{eq:ineqcentr} is an equality, so that $c_i=c^\star_i$ i.e. $q^\star=q_\lambda$; since $\|X_\lambda-X\|=\lambda\|X-q^\star(X)\|\le \lambda_0\|X-q^\star(X)\|$ implies from absolute margin condition that $q_\lambda$ is unique.
\end{proof}

\begin{lem}\label{lem:nonuniq}
Suppose $X$ satisfies the conditions of Lemma \ref{lem:main}.
Denote 
\[
\lambda^-=\min\{\lambda : c_\infty(\lambda)>0\}.
\]
Then $\lambda^-\ge \lambda_0$.
\end{lem}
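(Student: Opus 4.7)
I argue by contradiction: assume $\lambda^{-}<\lambda_0$ and derive an impossibility via Lemma \ref{lem:main} combined with the uniqueness clause of the absolute margin condition. The preliminary step is to show that $\lambda\mapsto c_\infty(\lambda)$ is continuous and that, for each $\lambda$, the supremum defining $c_\infty(\lambda)$ is attained at some $q_\lambda$ whose codebook lies in a fixed compact ball of $E^k$. The integrand $\min_j|x_\lambda-c_j|^2$ is continuous in $(x,\lambda,\mathbf c)$ and is dominated (since $P$ has bounded support by the hypothesis inherited from Lemma \ref{lem:main}), so $(q,\lambda)\mapsto c_q(\lambda)$ is jointly continuous in the codebook variable; moreover, centres pushed far from $\supp P$ are never nearest neighbours of $X_\lambda$ and can be brought back into a large ball without changing $c_q(\lambda)$, so the sup may be taken on a compact set. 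Continuity of $c_\infty$ follows. Together with $c_{q^\star}(\lambda)\ge 0$ (nearest-neighbour property) and $c_\infty(0)=0$ (by optimality of $q^\star$ at $\lambda=0$), continuity forces $\lambda^{-}$ to be an infimum and $c_\infty(\lambda^{-})=0$.

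Now fix $\lambda_1\in(\lambda^{-},\lambda_0)$, let $\e>0$ be the constant produced by Lemma \ref{lem:main} for this $\lambda_1$, and pick $\lambda_n\searrow\lambda^{-}$ in $(\lambda^{-},\lambda_1)$ with $c_\infty(\lambda_n)>0$. Let $q_n$ attain $c_{q_n}(\lambda_n)=c_\infty(\lambda_n)$ with codebook in the fixed compact set, and extract a subsequence converging to some $q_\ast$. Joint continuity yields $c_{q_\ast}(\lambda^{-})=\lim_n c_\infty(\lambda_n)=0$. Because $A(\cdot)$ is decreasing in $\lambda$ and $\lambda^{-}<\lambda_0$, we have $P(A(\lambda^{-}))=1$, hence $q^\star(X_{\lambda^{-}})=q^\star(X)$ almost surely and $\|X_{\lambda^{-}}-q^\star(X_{\lambda^{-}})\|^2=\|X_{\lambda^{-}}-q^\star(X)\|^2$. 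The identity $c_{q_\ast}(\lambda^{-})=0$ then reads $\|X_{\lambda^{-}}-q_\ast(X_{\lambda^{-}})\|^2=\|X_{\lambda^{-}}-q^\star(X_{\lambda^{-}})\|^2$, so $q_\ast$ and $q^\star$ achieve the same value of $q\mapsto\|Y-q(Y)\|^2$ for $Y=X_{\lambda^{-}}$, and $q_\ast$, being a limit of minimisers of this functional (at $\lambda_n$), is itself a minimiser at $\lambda^{-}$. Since $\|X-Y\|=\lambda^{-}\|X-q^\star(X)\|\le\lambda_0\|X-q^\star(X)\|$, part 2 of the absolute margin condition applies and gives uniqueness of the minimiser, so $q_\ast=q^\star$.

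Consequently $q_n\to q^\star$ in codebook, so $F_1(q_n,q^\star)\to 0$; for $n$ large enough one has both $F_1(q_n,q^\star)<\e$ and $\lambda_n<\lambda_1$, and Lemma \ref{lem:main} then forces $q_n=q^\star$. But this gives $c_\infty(\lambda_n)=c_{q^\star}(\lambda_n)=0$, using once more $P(A(\lambda_n))=1$ (valid because $\lambda_n<\lambda_0$), contradicting the choice $c_\infty(\lambda_n)>0$. This contradiction delivers $\lambda^{-}\ge\lambda_0$. The main obstacle is the compactness reduction in the first step: it is what legitimises extracting a convergent subsequence of maximisers $q_n$, without which one cannot invoke the uniqueness clause of the absolute margin condition at the limit $\lambda^{-}$ and the whole argument collapses.
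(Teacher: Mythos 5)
Your proof is correct and follows essentially the same route as the paper's: establish $c_\infty(\lambda^-)=0$, identify the maximiser at $\lambda^-$ with $q^\star$ via the uniqueness clause of the absolute margin condition, and then use convergence of the maximisers $q_{\lambda_n}\to q^\star$ together with Lemma \ref{lem:main} to force $q_{\lambda_n}=q^\star$ and contradict $c_\infty(\lambda_n)>0$. The only differences are cosmetic: you inline the compactness/continuity arguments that the paper delegates to Lemma \ref{lem:cont}, and you obtain $c_\infty(\lambda^-)=0$ from continuity of $c_\infty$ itself where the paper uses only continuity of $\lambda\mapsto c_{q_{\lambda^-}}(\lambda)$ for the fixed quantizer $q_{\lambda^-}$.
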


\begin{proof}[Proof of lemma \ref{lem:nonuniq}]
The idea of the proof of this lemma is that uniqueness condition of the margin condition implies continuity of the optimal quantizer with respect to $\lambda$ and previous lemma states that the only optimal quantizers for $X_\lambda$ that is close to $q^\star$ is $q^\star$.

Suppose $\lambda^-<\lambda_0$. Then, by second point of the margin condition (Definition \ref{def:alambda}), there exists only one quantizer $q_{\lambda^-}$ such that $c_{\infty}(\lambda^{-})=c_{q_{\lambda^-}}(\lambda^-)$.
By definition of $\lambda^-$, $c_{q_{\lambda^-}}(\lambda)\le 0$ for $\lambda<\lambda^-$. Therefore, by continuity of $\lambda\mapsto c_{q_{\lambda^-}}(\lambda)$,  
\[
c_{q_{\lambda^-}}(\lambda^-)=0,
\]
and thus; by previous Lemma \ref{lem:main}, $q_{\lambda^-}=q^\star$, since $c_{q^\star}(\lambda^-)=0$.
Now, for all $\lambda>\lambda^-$, denote by $q_\lambda$ any quantizer such that $c_{q_\lambda}(\lambda)=c_\infty(\lambda)$, which exists by Lemma \ref{lem:cont}.
Then by Lemma \ref{lem:cont} again, $F_1(q_\lambda, q_{\lambda^-})\rightarrow 0$ as $\lambda\rightarrow\lambda^-$, so that for all $\lambda-\lambda^->0$ small enough, Lemma \ref{lem:main} applies and states $q_\lambda=q_{\lambda^-}=q^\star$; which contradicts the definition of $\lambda^-$.
\end{proof}

Therefore, $c_\infty(\lambda_0)=0$, and thus \eqref{eq:equalitygeo} gives
\[
\mathbf F(q^{\star},q)^2\le\frac{1+\lambda_0}{\lambda_0}(R(q)-R(q^{\star})).
\]

Finally, by a continuity argument, the result still holds without the assumption of boundedness $\mathbf{P}(|X|\le R)=1$.

\subsection{Proof of Proposition \ref{thm:pstar}}
The following proof borrows some arguments from the proof of Lemma 4.2 of \cite{Le15}. Recall that $m=\inf_{i\ne j} |c^\star_i-c^\star_j|$.
Take $1\le i,j\le k$ and consider the hyperplane \[h^\star_{i,j}:=\{x\in E: |x-c^\star_i|=|x-c^\star_j|\}.\] 
Then, for all $x\in V_i(\mathbf c^{\star})$,
\begin{align}
d(x,h^\star_{i,j})&=\frac{|\langle c^\star_i+c^\star_j-2x,c^\star_i-c^\star_j\rangle|}{2|c^\star_i-c^\star_j|}\nonumber\\
&\le\frac{|\langle c^\star_i+c^\star_j-2x,c^\star_i-c^\star_j\rangle|}{2m}\nonumber\\
&=\frac{|x-c^\star_j|^2-|x-c^\star_i|^2}{2m}.
\label{thm:pstar:e1}
\end{align}
Without loss of generality, suppose now for simplicity that the permutation $\sigma$ achieving the minimum in the definition of $F_1(q^{\star},q)$ is the identity, $\sigma(j)=j$, so that 
\[F_1(q^{\star},q)=\max_i|c^{\star}_i-c_i|.\]
Then, it follows that for $x\in V_i({\bf c^\star})\cap V_j({\bf c})$,
\begin{align}
|x-c^\star_j|^2-|x-c^\star_i|^2 &\le (|x-c_j|+|c_j-c^\star_j|)^2-|x-c^\star_i|^2\nonumber\\
&\le(|x-c_i|+|c_j-c^\star_j|)^2-|x-c^\star_i|^2\nonumber\\
&\le(|x-c^\star_i|+|c_i-c^\star_i|+|c_j-c^\star_j|)^2-|x-c^\star_i|^2\nonumber\\
&=2|x-c^\star_i|(|c_i-c^\star_i|+|c_j-c^\star_j|)+(|c_i-c^\star_i|+|c_j-c^\star_j|)^2\nonumber\\
&\le4|x-c^\star_i|F_1(q^{\star},q)+4F_1(q^{\star},q)^2.\label{eq:forpstar}
\end{align}
Thus, using the fact that, for all $x\in V_i(\mathbf c^{\star})$, we have
\[d(x,\partial V_i(\mathbf c^{\star}))=\min_{i\ne j}d(x,h^{\star}_{i,j}),\]
we deduce from the previous observations that, for all $i\ne j$,
\[
V_i({\bf c^\star})\cap V_j({\bf c})\subset\left\{ x\in E: md(x,\partial V_i(\mathbf c^\star))\le 2|x-q^\star(x)| F_1(q^{\star},q) + 2F_1(q^{\star},q)^2\right\}.
\]
The right hand side being independent of $j$, we obtain in particular,
\[
\bigcup_{j\ne i}V_i({\bf c^\star})\cap V_j({\bf c})\subset\left\{ x\in E: md(x,\partial V_i(\mathbf c^\star))\le 2|x-q^\star(x)| F_1(q^{\star},q) + 2F_1(q^{\star},q)^2\right\}.
\]
Therefore,
\begin{align*}
\mathbf{E}|q^\star(X)-q(X)|^2&=\sum_{i,j=1}^k P(V_i(\bcs)\cap V_j(\bc))|c^\star_i-c_j|^2\\
&=\sum_{i=1}^k P(V_i(\bcs)\cap V_i(\bc))|c^\star_i-c_i|^2+\sum_{i\ne j, i=1, j=1}^k P(V_i(\bcs)\cap V_j(\bc))|c^\star_i-c_j|^2\\
&=\sum_{i=1}^k P(V_i(\bcs)\cap V_i(\bc))|c^\star_i-c_i|^2+\sum_{i\ne j, i=1, j=1}^k P(V_i(\bcs)\cap V_j(\bc))(|c^\star_j-c_j|+M)^2\\
&\le F_1(q^\star,q)^2+p^\star(F_1(q^\star,q))(F_1(q^\star,q)+M)^2
\end{align*}
which shows the desired result.

\appendix
\section{Technical results} 

\subsection{Proofs for Remark \ref{rem:haussdorf}}
\label{A:hausdorff}
Recall that, for any two sets $A$, $B\subset E$, their Hausdorff distance is defined by
\[d_{H}(A,B):=\inf\{\,\e>0:\, A\subset B^{\e}\mbox{ and } B\subset A^{\e}\,\},\]
where $A^{\e}=\{x\in E:d(x,A)\le \e\}$. The fact that $d_H(\mathbf c^{\star},\mathbf c)\le F_{1}(q^{\star},q)$ then follows easily from definitions. Now, to prove the second statement, observe that, in the context of the finite sets $\mathbf c$ and $\mathbf c^{\star}$, the infimum in the definition of $\delta:=d_H(\mathbf c^{\star},\mathbf c)$ is attained so that, for any $i\in\{1,\dots,k\}$, there exists some $j\in\{1,\dots,k\}$ such that $ c^{\star}_j\in B(c_i,\delta)=\{x\in E:|x-c_i|\le \delta\}$. Now suppose that 
\[ \delta <\frac{1}{2}\min_{i\ne j}|c^{\star}_i-c^{\star}_j|.\]
Then, the balls $B(c_i,\delta)$ are necessarily disjoint and therefore contain one and only one element of $\mathbf c^{\star}$, denoted $c^{\star}_{\sigma(i)}$. As a result, \[ F_1(q^{\star},q)\le \max_{i}|c_i-c^{\star}_{\sigma(i)}|=\delta,\]
where the last equality follows by construction. This implies the desired result.

\subsection{Proof for Remark \ref{lem:l0}}
Let $x\in E$ and denote $c_i=q(x)$. First, it may be checked that assumption $d(x,\mathcal F(\bc))> \e$ holds if, and only if, 
\begin{equation}
\label{lem:l0:e1}
\forall\,j\ne i:\quad\frac{\langle x-c_i,c_j-c_i\rangle}{|c_j-c_i|} < \frac{|c_j-c_i|}{2}-\e.
\end{equation}
Similarly, observe that $q(x_\lambda)=q(x)$ if and only if, for $j\ne i$, we have $| x_{\lambda}-c_i|< | x_{\lambda}-c_j|$. Using the definition of $x_{\lambda}$, this last condition may be equivalently written, for all $j\ne i$, as 
\begin{align}
(1+\lambda)^2|x- c_i|^2 &< |x-c_j|^2+2\lambda\langle x-c_i,x-c_j\rangle +\lambda^2|x- c_i|^2
\nonumber\\
& = (1+\lambda^2)|x-c_i|^2 +2\langle x-c_i,\lambda(x-c_j)+c_i-c_j\rangle+|c_i-c_j|^2.
\label{lem:l0:e2}
\end{align}
After simplification in \eqref{lem:l0:e2}, we therefore obtain that $q(x_\lambda)=q(x)$ if, and only if, 
\begin{align}
\forall\,j\ne i:\quad 0&<  |c_i-c_j|-2(1+\lambda)\frac{\langle x-c_i,c_j-c_i\rangle}{|c_j-c_i|}.
\label{lem:l0:e3}
\end{align}
The result now easily follows from combining \eqref{lem:l0:e1} and \eqref{lem:l0:e3}.

\subsection{A consistency result} The next result is adaptated from Theorems 4.12 and 4.21 in \citealp{GrLu00}.
\begin{lem}
\label{lem:cont}
Suppose $X\in L^2(\prob)$. Then, letting $q^{\star}$ be an optimal $k$-points quantizer for the distribution of $X$ and denoting $X_{\lambda}=X+\lambda (X-q^{\star}(X))$, the following statements hold.
\begin{enumerate}
\item For any $\lambda\ge 0$, there exists a $k$-points NN quantizer $q_{\lambda}$ such that 
\[\|X_{\lambda}-q_{\lambda}(X_\lambda)\|^2=\min_{q}\|X_{\lambda}-q(X_\lambda)\|^2,\]
where the minimum is taken over all $k$-points quantizers.
\item For all $\lambda_0\ge 0$, if $q_{\lambda_0}$ is unique,
\[
\lim_{\lambda\to\lambda_0}F_1(q_{\lambda},q_{\lambda_0})=0.
\]
\end{enumerate}
\end{lem}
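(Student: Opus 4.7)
The plan is to treat the two parts separately. Part (1) is an immediate application of a classical existence theorem, whereas part (2) is a standard compactness-plus-uniqueness argument adapted to the parametric family $X_\lambda$.

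For part (1), I would first check that $X_\lambda\in L^2(\prob)$. Since $q^\star$ is optimal, comparing its distortion with that of any fixed $k$-points quantizer and applying the triangle inequality yields $\|q^\star(X)\|\le 2\|X\|$, hence $\|X_\lambda\|\le (1+2\lambda)\|X\|<\infty$. Applying Theorem 4.12 of \citet{GrLu00} to the law of $X_\lambda$ then produces an optimal $k$-points quantizer, which, as recalled in subsection \ref{kmeans}, may be taken to be NN without loss of generality.

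For part (2), my approach is to show that any subsequence of $(q_\lambda)_{\lambda\to\lambda_0}$ admits a further subsequence $F_1$-converging to $q_{\lambda_0}$. Let $\lambda_n\to\lambda_0$ and write $q_{\lambda_n}(E)=\{c_{\lambda_n,1},\dots,c_{\lambda_n,k}\}$. The first step is to establish that these centers lie in a fixed bounded set for all $\lambda$ in a neighborhood of $\lambda_0$. Since $\lambda\mapsto\|X_\lambda\|$ is continuous, the optimal distortion $\|X_\lambda-q_\lambda(X_\lambda)\|^2$ is locally bounded; combined with the center condition (each $c_{\lambda,j}$ is a conditional expectation of $X_\lambda$ on its Voronoi cell) and a uniform lower bound on cell masses (derived from the support assumption $|\supp(P)|\ge k$ propagated to $X_\lambda$), this bounds all centers uniformly. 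One can then extract a subsequence $\lambda_{n_m}$ such that each $c_{\lambda_{n_m},j}\to c_{\infty,j}$, and, by joint continuity of $(\mathbf c,\lambda)\mapsto\esp\,\min_j|X_\lambda-c_j|^2$, the codebook $\mathbf c_\infty=\{c_{\infty,1},\dots,c_{\infty,k}\}$ minimizes the distortion associated with $X_{\lambda_0}$. One also rules out $|\mathbf c_\infty|<k$ by the support condition: collapsing two centers would yield a $(k-1)$-points quantizer which, since $|\supp(P_{X_{\lambda_0}})|\ge k$, has strictly larger distortion than the $k$-points optimum. Uniqueness of $q_{\lambda_0}$ then forces $\mathbf c_\infty=q_{\lambda_0}(E)$, and the subsequence principle gives $F_1(q_{\lambda_n},q_{\lambda_0})\to 0$.

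The main obstacle is the boundedness step of part (2): one must simultaneously rule out both escape of centers to infinity and collapse of two centers, uniformly in $\lambda$ near $\lambda_0$. Both ultimately rest on the support condition and continuity of cell masses along the family $(X_\lambda)$, essentially reproducing the proof of Theorem 4.21 of \citet{GrLu00}; the detailed bookkeeping of cell masses as $\lambda\to\lambda_0$ is the most technical aspect, while the remaining continuity and uniqueness arguments are routine.
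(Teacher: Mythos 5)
Your overall strategy---existence for part (1) via Graf--Luschgy, and for part (2) a compactness-plus-uniqueness argument on the codebooks (bounded centers, subsequence extraction, continuity of the distortion, exclusion of center collapse, then the subsequence principle)---is sound and is essentially the route the paper takes, its proof being explicitly an adaptation of Theorems 4.12 and 4.21 of \citet{GrLu00}. The difference is in execution: the paper works directly with the random variables, extracting a weakly convergent subsequence of $q_{\lambda_n}(X_{\lambda_n})$ and combining Fatou's lemma with a Hilbert-space decomposition to conclude that $\esp\liminf|q_0(X_{\lambda_0})-q_{\lambda_n}(X_{\lambda_n})|^2=0$, whence $q_0=q_{\lambda_0}$ by uniqueness; it also restricts the written argument to bounded support and defers the general case to \citet{GrLu00}. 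Your version, which manipulates the codebooks themselves, is closer to the original Graf--Luschgy argument and is more explicit about why centers neither escape to infinity nor collapse.

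One step needs repair. In a general (possibly infinite-dimensional) separable Hilbert space, boundedness of the centers only yields a \emph{weakly} convergent subsequence, and the map $(\mathbf c,\lambda)\mapsto\esp\min_j|X_\lambda-c_j|^2$ is not weakly continuous in $\mathbf c$: the term $|c_j|^2$ is only weakly lower semicontinuous. So the ``joint continuity'' you invoke fails as stated. What survives is weak lower semicontinuity of the distortion, which, combined with the easy upper bound $\limsup_{\lambda\to\lambda_0}\inf_q\|X_\lambda-q(X_\lambda)\|^2\le\|X_{\lambda_0}-q_{\lambda_0}(X_{\lambda_0})\|^2$ (obtained by evaluating at the fixed quantizer $q_{\lambda_0}$ and using $X_\lambda\to X_{\lambda_0}$ in $L^2$), still shows that $\mathbf c_\infty$ is optimal for $X_{\lambda_0}$; and the resulting equality in the $\liminf$ chain upgrades weak convergence of the centers to norm convergence, which is what $F_1\to 0$ actually requires. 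This $\liminf$/Fatou mechanism is precisely how the paper's proof proceeds. In finite dimensions your argument works verbatim; the remaining bookkeeping (uniform lower bound on cell masses, exclusion of $|\mathbf c_\infty|<k$) is standard and you correctly identify it as the technical core.
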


\begin{proof}[Proof of lemma \ref{lem:cont}]
We state the result for a measure with bounded support and refer to \cite{GrLu00} for unbounded case.
\begin{enumerate}
    \item Let $q_n$ be a sequence of quantizers such that
    \[
    \|X_\lambda-q_n(X)\|^2\rightarrow \inf_q\|X_\lambda - q(X)\|^2,
    \]
    
    as $n\rightarrow \infty$. Since balls in $E$ are weakly compact, the centers $q_n(E)=\{c_1^n,\dots,c_k^n\}$ weakly converge to some limit $\{c_1,\dots,c_k\}$ up to a subsequence.
    Denote $q_0(X_\lambda)$ a limit of a weakly converging subsequence of $q_n(X_\lambda)$, realizing the limit $\liminf\|X-q_n(X_\lambda)\|^2$ then, by Fatou Lemma,
    \begin{align*}
        \liminf \|X_\lambda-q_n(X_\lambda)\|^2\ &\ge \mathbf E \liminf|X_\lambda-q_n(X_\lambda)|^2\\
        &= \|X_\lambda - q_0(X_\lambda)\|^2 + \mathbf E \liminf |q_0(X_\lambda)-q_n(X_\lambda)|^2\\
        &\ge \inf_q\|X_\lambda - q(X_\lambda)\|^2+ \mathbf E \liminf |q_0(X_\lambda)-q_n(X_\lambda)|^2,
    \end{align*}
    which shows that $q_0$ realizes the minimum of $\inf_q\|X_\lambda - q(X)\|^2$.\\
\item Similarly, for any sequence $\lambda_n\rightarrow\lambda_0$ as $n\rightarrow\infty$, $q_{\lambda_n}(X_{\lambda_n})$ has a weak limit $q_0(X_{\lambda_0})$ (up to subsequence).
    Then,
    \begin{align*}
        \|X_{\lambda_0}-q_{\lambda_0}(X_{\lambda_0})\|^2&= \liminf_{n\rightarrow\infty} \|X_{\lambda_n}-q_{\lambda_0}(X_{\lambda_n})\|^2\ \\
        &\ge \liminf_{n\rightarrow\infty} \|X_{\lambda_n}-q_{\lambda_n}(X_{\lambda_n})\|^2\ \\
        &\ge \mathbf E \liminf|X_{\lambda_n}-q_{\lambda_n}(X_{\lambda_n})|^2\\
        &\ge \|X_{\lambda_0} - q_0(X_{\lambda_0})\|^2 + \mathbf E \liminf |q_0(X_{\lambda_0})-q_{\lambda_n}(X_{\lambda_n})|^2\\
        &\ge \|X_{\lambda_0} - q_{\lambda_0}(X_{\lambda_0})\|^2 + \mathbf E \liminf |q_0(X_{\lambda_0})-q_{\lambda_n}(X_{\lambda_n})|^2.
    \end{align*}
    The last inequality holds because $q_{\lambda_0}$ is optimal.
    This shows 
    $\mathbf E \liminf|q_0(X)-q_n(X)|^2= 0$, and since $q_{\lambda_0}$ is supposed to be unique $q_0=q_{\lambda_0}$ and therefore, every subsequence converges to the same limit $q_{\lambda_0}$; so $F_1(q_\lambda,q_{\lambda_0})\rightarrow 0$.

\end{enumerate}
\end{proof}

\section{Stability of a learning problem}
\label{contrast}
In this section, we briefly argue that the problem considered in the paper, while of special interest in the context of unsupervised learning, finds a natural extension in a more general framework of learning theory, namely the context of contrast minimization. Let $\mathcal Z$ be a measurable space equipped with a probability distribution $P$ and let $T$ be a given set of parameters. Suppose given a sample $Z_1,\dots,Z_n$ of i.i.d. variables with common distribution $P$. Given a contrast function 
\[\mathcal C:\mathcal Z\times T\to \R_+,\] 
consider the problem of designing a data driven $t$, based on the sample $Z_1,\dots,Z_n$, achieving a small value of the risk function 
\[
R(t):= \int\mathcal C(z,t)\,\textrm{d}P(z).
\]
This general problem, known as contrast minimization, is a classical way to unify the supervised and unsupervised learning approaches as illustrated in the next example.

\begin{exm}
Classical examples include the following.
\begin{itemize}
\item \emph{\textbf{Supervised learning.}}  The supervised learning problem corresponds to the contrast minimization problem  where $\mathcal Z=\mathcal X\times \mathcal Y$, where $T$ is a class of candidate functions $t:\mathcal X\to \mathcal Y$ and where, for a given loss function $\ell:\mathcal Y^2\to\R_+$, the contrast is 
\[\mathcal C((x,y),t)=\ell(y,t(x)).\]
\item \emph{\textbf{Unsupervised learning.}}  The unsupervised learning problem discussed earlier in the present paper corresponds to the contrast minimization problem where $\mathcal Z$ is a metric space $(E,d)$, where $T$ is the set $\mathfrak Q$ of all $k$-points quantizers, for a given integer $k$, and where the contrast function is 
\begin{equation}
\label{contrast2}
\mathcal C(x,q)=d(x,q(x))^2.
\end{equation}
\end{itemize}
\end{exm}

Given the general problem of contrast minimization, formulated above, one may naturally extend the question discussed in the present paper by considering the following notion of stability.

\begin{defi}
Consider a function $F:T^2\to \R_+$ and an increasing function $\phi:\R_+\to\R_+$. Then, the contrast minimization problem is called $(F,\phi,\e)$-stable if, for any $t^{\star}$ minimizing the risk on $T$, 
\[F(t^{\star},t)\le\e\quad\Rightarrow\quad F(t^{\star},t)\le \phi(R(t)-R(t^{\star})).\]
\end{defi}

Our main result, Theorem \ref{thm:absmarg}, proves the stability of the contrast minimization problem for the contrast function defined in \eqref{contrast2}. The following result proves the stability of the supervised learning problem for a strongly convex loss function.

\begin{exm}
Consider the supervised learning problem described in the example above. Suppose there exists $\alpha>0$ such that, for all $y\in\mathcal Y$, the function $u\in\mathcal Y\mapsto\ell(y,u)$ is $\alpha$-strongly convex. Then, for any convex class $T$ of functions $t:\mathcal X\to \mathcal Y$ and any $t^{\star}$ minimizing the risk on $T$, we have
\[\int(t-t^{\star})^2\,\textrm{d}\mu\le \frac{4}{\alpha}(R(t)-R(t^{\star})),\]
for any $t\in T$, where $\mu$ is the marginal of $P$ on $\mathcal X$. In particular, for all $\e>0$, this learning problem is $(\e,\phi)$-stable for the $L^2(\mu)$ metric with $\phi(u)=2\sqrt{u}/\sqrt{\alpha}.$
\end{exm}

\bibliography{bibKMEANS}
\end{document}